\def\BBox{\rule{2mm}{3mm}}
\def\QED{\hfill$\BBox$}
\newenvironment{proof}
{\begin{rm}\par\smallskip\noindent{\bf Proof.}\quad}{\QED\end{rm}}
\newtheorem{thm}{Theorem}[section]
\newtheorem{lem}[thm]{Lemma}        %
\newtheorem{prop}[thm]{\bfseries Proposition} %
\newtheorem{defn}[thm]{\bfseries Definition}
\begin{document}

\title{On Combinatorial Properties of Points and Polynomial Curves}

\author{
Hiroyuki Miyata
\\
Department of Computer Science, \\
Gunma University, Japan\\
{\tt hmiyata@gunma-u.ac.jp}
}

\maketitle

\begin{abstract}
Many combinatorial properties of a point set in the plane are determined by the set of possible partitions of the point set by a line. 
Their essential combinatorial properties are well captured by the axioms of oriented matroids.
In fact, Goodman and Pollack (Journal of Combinatorial Theory, Series A, Volume 37, pp.~257--293, 1984) proved that the axioms of oriented matroids of rank $3$
completely characterize the sets of possible partitions arising from a natural topological generalization of configurations of points and lines.

In this paper, we introduce a new class of oriented matroids, called degree-$k$ oriented matroids, which captures essential combinatorial properties of the possible partitions 
of point sets in the plane by the graphs of polynomial functions of degree $k$.
We prove that the axiom of degree-$k$ oriented matroids completely characterizes the sets of possible partitions arising from a natural topological generalization of configurations 
formed by points and the graphs of polynomial functions degree $k$. It turns out that the axiom of degree-$k$ oriented matroids coincides with the axiom of ($k+2$)-signotopes, which was introduced by Felsner and Weil 
(Discrete Applied Mathematics, Volume 109, pp.~67--94, 2001) in a completely different context.
Our result gives a two-dimensional geometric interpretation for ($k+2$)-signotopes and also for single element extensions of cyclic hyperplane arrangements in $\mathbb{R}^{n-k-3}$.
\end{abstract}

\section{Introduction}
Possible partitions of a point set  in the plane by a line are actively studied in discrete and computational geometry.
For example, the famous ham-sandwich theorem claims that for any set of points in the plane that are colored red or blue there is a line that simultaneously bisects the red points and the blue points (for details and related results, see~\cite{KU21}).
The number of \emph{$k$-sets} of a point set, $k$-element subsets of the point set that can be separated from the other points by a line, is also heavily studied (see \cite[Chapter 11]{M02}).
Actually, many combinatorial properties (e.g. convexity, possible triangulations) of a point set are determined by the possible partitions by a line.
If we collect all possible partitions of a point set by a line,  it corresponds to consider the \emph{order type}~\cite{GP83}, which is a fundamental notion in discrete and computational geometry.
Formally, two point sets are said to have the same order type if there is a bijection between the point sets that induces a bijection between the sets of possible partitions of the point sets by a line.
Order types have several other equivalent formulations (e.g.  by the orientation of each triple of points)~\cite{GP84} and are very useful to identify and classify combinatorial structures of point sets in the plane~\cite{AAK02,GP80,GP83,GP84}.
Furthermore, order types have a nice combinatorial abstraction, called \emph{oriented matroids}. 

Oriented matroids are a well-studied combinatorial model, introduced by Bland and Las Vergnas~\cite{BL78} and Folkman and Lawrence~\cite{FL78}.
Oriented matroids have several equivalent axiom systems, which capture common combinatorial structures that exist in point sets, polytopes, hyperplane arrangements, digraphs, etc. 
The axioms of oriented matroids are simple, but many combinatorial behaviors of those objects  are well explained by the axioms of oriented matroids.
(For example, the equivalence of the different formulations for order types can be understood as the equivalence of the \emph{covector, cocircuit,} and \emph{chirotope} axioms of oriented matroids~\cite{GP84}. Many other examples can be found in~\cite{BLSWZ99}.)
In the case of hyperplane arrangements, the Folkman-Lawrence topological representation theorem~\cite{FL78} gives a very nice explanation for this.
That is, the Folkman-Lawrence topological representation theorem claims that every (loop-free) oriented matroid has a topological representation as a \emph{pseudosphere arrangement}, which can be viewed as a topological generalization of a hyperplane arrangement.
This means that oriented matroids are a very precise combinatorial model for hyperplane arrangements.
An analogous result is also known for point sets in the plane;
Goodman and Pollack~\cite{GP84} proved that every (loop-free) oriented matroid of rank $3$ has a topological representation as a \emph{generalized configurations of points}, which means that the axioms of oriented matroids completely characterize the sets of possible partitions of point sets in the plane by a pseudoline.
Thus, the axioms of oriented matroids also capture essential combinatorial properties of possible partitions of point sets in the plane by a line.

In this paper, we investigate combinatorial properties of another kind of geometric partitions: possible partitions of a point set in the plane by the graph of a polynomial function of degree $k$.
Combinatorial properties of such partitions were first closely investigated by Eli\'a\v{s} and Matou\v{s}ek~\cite{EM13} to prove a new generalization of the Erd\H{o}s-Szekeres theorem.
In this paper, we discuss more closely what kind of combinatorial properties are exhibited by such partitions.
In fact, we show that the combinatorial properties observed by Eli\'a\v{s} and Matou\v{s}ek represent almost \emph{all} combinatorial properties that can be proved using some natural geometric properties.
To do so, we first review the combinatorial properties observed by Eli\'a\v{s} and Matou\v{s}ek, and then observe a slightly stronger combinatorial property.
Axiomatizing the observed combinatorial properties, we introduce a combinatorial model, called {\it degree-$k$ oriented matroids}. 
Then, we present a topological representation theorem for degree-$k$ oriented matroids. More specifically, we prove that every degree-$k$ oriented matroid has a topological representation as a {\it $k$-intersecting pseudoconfiguration of points}, 
which is a natural geometric generalization of a configuration of points and the graphs of polynomial functions of degree $k$.
This result can be viewed as an analogue of the result of Goodman and Pollack~\cite{GP84}, and implies that the axiom of degree-$k$ oriented matroids describes essential combinatorial properties of the possible partitions of point sets by the graphs of polynomial functions of degree $k$.
Coincidentally, it  turns out that the axiom of degree-$k$ oriented matroids coincides with the axiom of $(k+2)$-signotopes, introduced by Felsner and Weil~\cite{FW01},
which is closely related to higher Bruhat orders~\cite{MS89}.
Prior to Felsner and Weil~\cite{FW01}, signotopes were also studied by Ziegler~\cite{Z93} under the name of ``consistent subsets'' .
Ziegler~\cite{Z93} proved that $k$-signotopes (consistent subsets) have a geometric interpretation as single element extensions of cyclic hyperplane arrangements in $\mathbb{R}^{n-k-1}$.
Hence, our result gives a two-dimensional geometric interpretation for signotopes and also for single element extensions of cyclic hyperplane arrangements.

\subsection*{Related work}
The possible partitions of a point set in the Euclidean space by a certain family of spheres determine an oriented matroid, which is called a {\it Delaunay oriented matroid} (see \cite[Section 1.9]{BLSWZ99}).
Santos~\cite{S96} proved that partitions of a point set in the plane by spheres defined by a smooth, strictly convex distance function also fulfill the oriented matroid axioms.
It is proved in \cite{M16} that the class of partitions of point sets in the plane by a certain kind of pseudocircles coincides with the rank $4$ matroid polytopes. 

\subsection*{Notation}
Here, we summarize the notation that will be employed in this paper.
In the following, we assume that $S$ is a finite ordered set, $X$ is a sign vector on $E$, $P$ is a point in the plane, and $r$ is a positive integer.

\begin{multicols}{2}
\begin{itemize}[itemsep=0mm,leftmargin=*]
\item $\Lambda (S, r) := \{ (\lambda_1,\dots,\lambda_r) \in S^r \mid \lambda_1 < \cdots < \lambda_r \}$.
\item $\bar{\lambda} := \{ \lambda_1,\dots,\lambda_r \}$ for $\lambda = (\lambda_1,\dots,\lambda_r) \in  \Lambda (S,r)$.
\item $\lambda \setminus \{ \lambda_k\} := (\lambda_1,\dots,\lambda_{k-1},\lambda_{k+1},\dots,\lambda_r)$ 
\item[]for $\lambda = (\lambda_1,\dots,\lambda_r) \in \Lambda (S,r)$.
\item $\lambda [\lambda_i | k] := (\lambda_1,\dots,\lambda_{i-1},k,\lambda_{i+1}\dots,\lambda_r)$ 
\item[] for $\lambda = (\lambda_1,\dots,\lambda_r) \in  \Lambda (S,r)$.
\item $\bar{X} := \{ e \in E \mid X(e) \neq 0 \}$.
\item $X^+$ (resp. $X^-, X^0$)
\item[]$:= \{ e \in E \mid X(e) = +1$ (resp. $-1, 0$)$\}$.
\item $x(P)$: the $x$-coordinate of $P$.
\item $y(P)$: the $y$-coordinate of $P$.
\end{itemize}
\end{multicols}

\section{Preliminaries}
In this section, we summarize some basic facts regarding oriented matroids.
See \cite{BLSWZ99} for further details.

Let $P = \{ p_1,\dots,p_n \}$ be a point set  in $\mathbb{R}^d$ in general position (i.e., no $d+1$ points of $P$ lie on the same hyperplane),
and let $V=\{ v_1, \dots, v_n \}$ be the set of vectors in $\mathbb{R}^{d+1}$ with $v_e := (p_e^T,1)^T$.
To see how $P$ is separated by hyperplanes, let us consider
 the map $\chi_P : [n]^{d+1} \rightarrow \{ +1,-1,0 \}$ defined by
\[ \chi_P (i_1,\dots,i_{d+1}) := {\rm sign}\det(v_{i_1},\dots,v_{i_{d+1}}) \text { for $i_1,\dots,i_{d+1} \in [n]$,}\]
where ${\rm sign} (a) = + 1$ (resp. $-1, 0$) if $a > 0$ (resp. $a < 0, a = 0$).
Then, we have
\begin{align*}
&\chi_P (i_1,\dots,i_d,j)\chi_P (i_1,\dots,i_d,k) = +1 \\
&\Leftrightarrow \text{ The points $p_j$ and $p_k$ lie on the same side of the hyperplane spanned by $p_{i_1},\dots,p_{i_d}$.}
\end{align*}
The map $\chi_P$ contains rich combinatorial information regarding $P$, such as convexity, the face lattice of the convex hull, and possible combinatorial types of triangulations (see  \cite{BLSWZ99}).
The chirotope axioms of oriented matroids are obtained by abstracting the properties of  $\chi_P$.

\begin{defn}(Chirotope axioms for oriented matroids)\\
For $r \in \mathbb{N}$ and a finite set $E$, a map $\chi: E^r \rightarrow \{ +1,-1,0\}$ is called a {\it chirotope}
if it satisfies the following axioms. The pair $(E,\{ \chi, -\chi \})$ is called an {\it oriented matroid of rank $r$}.
\begin{itemize}
\item[(B1)] $\chi$ is not identically zero.
\item[(B2)] $\chi(i_{\sigma(1)},\dots,i_{\sigma(r)}) = {\rm sgn}(\sigma)\chi (i_1,\dots,i_r)$, for any $i_1,\dots,i_r \in E$ and any permutation $\sigma$ on $E$.
\item[(B3)] For any $\lambda, \mu \in E^r$, we have  
\[ \{ \chi (\lambda )\chi (\mu ) \} \cup \{ \chi (\lambda[\lambda_1 | \mu_s]) \chi (\mu [\mu_s | \lambda_1]) \mid s = 1,\dots, r \} \supset \{ +1,-1\} \text{ or } = \{ 0 \} .\]
\end{itemize}
\end{defn}
We remark that (B3) is combinatorial abstraction of the Grassmann-Pl\"ucker relations: 
\[ \det(V_{\lambda})\det(V_{\mu}) = \sum_{s=1}^r{\det(V_{\lambda[\lambda_1|\mu_s]})\det(V_{\mu[\mu_s|\lambda_1]})},   \]
where $V_{\tau} := (v_{\tau_1},\dots,v_{\tau_r})$ for $\tau \in \Lambda([n],d+1)$.

The set $C^*_P := \{ \pm (\chi_P (\lambda, e))_{e \in [n]} \mid \lambda \in [n]^d \}$ also contains equivalent information to $\chi_P$.
The cocircuit axioms of oriented matroids are introduced by abstracting the properties of the set ${\cal C}^*_P$. 
\begin{defn}(Cocircuit axioms for oriented matroids)\\
A collection ${\cal C}^* \subset \{ +1,-1,0\}^E$ satisfying Axioms (C0)--(C3)  is called  the set of {\it cocircuits} of an oriented matroid.
\begin{itemize}
\item[(C0)] $0 \notin {\cal C}^*$.
\item[(C1)] ${\cal C}^* = -{\cal C}^*$.
\item[(C2)] For all $X,Y \in {\cal C}^*$, if $\bar{X} \subset \bar{Y}$, then $X = Y$ or $X=-Y$.
\item[(C3)] For any $X,Y \in {\cal C}^*$  and $e \in (X^+ \cap Y^-) \cup (X^- \cap Y^+)$, there exists $Z \in {\cal C}^*$ with
\item[] $Z^0 = (X^0 \cap Y^0) \cup \{ e \}$, $Z^+ \supset X^+ \cap Y^+$, and  $Z^- \supset X^- \cap Y^-$.
\end{itemize}
\end{defn}
From a chirotope $\chi$, we can construct the cocircuits $C^* := \{ \pm (\chi(\lambda,e))_{e \in E} \mid \lambda \in \Lambda (E,r-1) \}$.
It is also possible to reconstruct the chirotope $\chi$ (up to a sign reversal) from the cocircuits ${\cal C}^*$.
A rank $r$ oriented matroid $(E, \{ \chi, -\chi \} )$ is said to be {\it uniform} if $\chi (\lambda ) \neq 0$ for any $\lambda \in \Lambda (E,r)$ 
(equivalently if $|X^0| = r$ for any cocircuit $X$).
If the underlying structure of the set ${\cal C}^* \subset \{ +,-,0\}^E$ is known to be uniform, i.e., if $|X^0|=r$ for any $X \in {\cal C}^*$,
then Axiom (C3) can be replaced by the following axiom:
\begin{quote}
(C3') For any $X,Y \in {\cal C}^*$ with $|X^0 \setminus Y^0| = 1$ and $e \in (X^+ \cap Y^-) \cup (X^- \cap Y^+)$, there exists $Z \in {\cal C}^*$ 
with $Z^0 = (X^0 \cap Y^0) \cup \{ e \}$, $Z^+ \supset X^+ \cap Y^+$, and  $Z^- \supset X^- \cap Y^-$.
\end{quote}
More generally, Axiom (C3) can be replaced by the axiom of  {\it modular elimination}. For further details, see \cite[Section 3.6]{BLSWZ99}.
An oriented matroid $(E,{\cal C}^*)$ is {\it acyclic} if for any $e \in E$ there exists $X_e \in {\cal C}^*$ with $e \in X_e^+$ and $X_e^- = \emptyset$.
It can easily be seen that oriented matroids arising from point configurations are acyclic.
\\
\\
One of the outstanding facts in oriented matroid theory is that oriented matroids always admit topological representations, as established by Folkman and Lawrence~\cite{FL78}.
Here, we explain a variant of this fact, which was originally formulated in terms of {\it allowable sequences} by Goodman and Pollack~\cite{GP84}.
First, we observe that oriented matroids also arise from generalization of point configurations, called {\it pseudoconfigurations of points} 
(also called {\it generalized configurations of points}). 
\begin{defn}(Pseudoconfigurations of points) \\
A pair $PP=(P,L)$ of a point configuration $P:=(p_e)_{e \in [n]}$ in $\mathbb{R}^2$ and a collection $L$ of unbounded Jordan curves
is called a {\it pseudoconfiguration of points} (or a {\it generalized configuration of points}) if the following hold.
\begin{itemize}
\item For any $l \in L$, there exist at least two points of $P$ lying on $l$.
\item For any two points of $P$, there exists a unique curve in $L$ that contains both points.
\item Any pair of (distinct) curves $l_1,l_2 \in L$ intersects at most once. 
\end{itemize}
\end{defn}
For each $l \in L$, we label the two connected components of $\mathbb{R}^2 \setminus l$  arbitrarily as $l^+$ and $l^-$.
Then, we assign the sign vector $X_{l} \in \{ +1,-1,0\}^n$ 
such that $X_{l}^0 =  \{ e \in [n] \mid  p_e \in l\}$, $X_{l}^+ = \{ e \in [n] \mid  p_e \in l^+\}$ and  
$X_{l}^- =\{ e \in [n] \mid  p_e \in l^-\}$,
and we let ${\cal C}^*_{PP} := \{ \pm X_{l} \mid l \in L\}$.
Then, ${\cal M}_{PP}=([n], {\cal C}^*_{PP})$ turns out to be an acyclic oriented matroid of rank $3$.
Goodman and Pollack~\cite{GP84} proved that in fact the converse also holds.
\begin{thm}{\rm (Topological representation theorem for acyclic oriented matroids of rank $3$~\cite{GP84})}\\
For any acyclic oriented matroid ${\cal M}$ of rank $3$, there exists a pseudoconfiguration of points $PP$ with ${\cal M} = {\cal M}_{PP}$.
\end{thm}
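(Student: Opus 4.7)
My plan is to deduce this from the Folkman-Lawrence topological representation theorem for rank-$3$ oriented matroids together with a dualization step, which is essentially the allowable-sequence argument of Goodman and Pollack. The elements of $\mathcal{M}$ correspond to \emph{points} of the target pseudoconfiguration but to \emph{pseudolines} in the Folkman-Lawrence representation, so the bulk of the work is to pass from one picture to the other while preserving the oriented-matroid data.

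First, apply Folkman-Lawrence to obtain a topological representation of $\mathcal{M}$ as a pseudoline arrangement $\mathcal{A}^* = \{l_e^*\}_{e \in [n]}$ in $\mathbb{RP}^2$ whose covector structure coincides with that of $\mathcal{M}$. Acyclicity translates into the existence of a chamber $C^*$ of $\mathcal{A}^*$ whose sign vector has all entries $+$. Choose an interior point of $C^*$ as an "origin at infinity" and cut $\mathbb{RP}^2$ along a small topological circle inside $C^*$ to obtain an affine pseudoline arrangement in $\mathbb{R}^2$ with the same combinatorics. This is the dual picture to the pseudoconfiguration we wish to produce.

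Next, reconstruct $(P,L)$ from the affine arrangement via a sweep. Move a simple transversal curve $\gamma_t$ across $\mathcal{A}^*$ and record, at each time $t$, the left-to-right ordering of the intersection points $\gamma_t \cap l_e^*$; this defines an allowable sequence $\Sigma = (\pi_0, \pi_1, \dots, \pi_N)$ of permutations of $[n]$ in which consecutive permutations differ by the reversal of a subsequence corresponding to a cocircuit of $\mathcal{M}$. Place the points $P = (p_e)_{e \in [n]}$ in $\mathbb{R}^2$ so that the order of their $x$-coordinates agrees with $\pi_0$. For each reversal move from $\pi_i$ to $\pi_{i+1}$ --- which encodes a cocircuit $X \in \mathcal{C}^*$ --- draw an unbounded Jordan curve through the points $\{p_e : e \in X^0\}$ that separates $\{p_e : e \in X^+\}$ from $\{p_e : e \in X^-\}$. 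Let $L$ be the collection of these curves.

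The main obstacle is verifying the three axioms of a pseudoconfiguration of points for this construction, most delicately the requirement that any two curves of $L$ meet in at most one point. This is precisely where the combinatorial compatibility encoded by the cocircuit elimination axiom (C3) (or its modular strengthening in the non-uniform case) is essential: two curves corresponding to distinct cocircuits would meet more than once only if the signs they induce on some common element were inconsistent along the sweep, which is ruled out by (C3) together with the uniqueness condition (C2). Acyclicity is used again here to ensure that the sweep meets each pseudoline in exactly one point and that every cocircuit is encountered as a full reversal rather than a partial move, so that the Jordan curve representing it is unbounded and correctly oriented. Once these axioms are verified, $\mathcal{C}^*_{PP} = \mathcal{C}^*$ by construction, completing the proof.
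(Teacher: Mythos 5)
The paper does not actually prove this theorem; it is quoted from Goodman--Pollack \cite{GP84}, so there is no internal proof to compare against. Judged on its own terms, your outline follows the standard route (Folkman--Lawrence in rank $3$, pass to an affine pseudoline arrangement using acyclicity, dualize via an allowable sequence, redraw points and connecting curves in the primal plane), which is indeed how \cite{GP84} proceeds. However, the step you yourself identify as the main obstacle --- that any two curves of $L$ intersect at most once --- is asserted rather than proved, and the justification offered does not work. Axioms (C2) and (C3) constrain the sign vectors of $\mathcal{C}^*$; they say nothing about whether two Jordan curves, drawn freehand so as to realize prescribed partitions of $P$, acquire extra crossings at locations carrying no point of $P$. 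Two curves realizing cocircuits $X$ and $Y$ can induce perfectly consistent signs on every element of $[n]$ and still cross each other three times in a region containing no point of the configuration.

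What is actually needed is twofold. First, the combinatorial fact that two distinct cocircuits of a rank-$3$ oriented matroid share at most one zero element in common position (dually: two vertices of the pseudoline arrangement lie on at most one common pseudoline, since two pseudolines cross only once), which guarantees that two curves of $L$ pass through at most one common point of $P$. Second, and more importantly, a \emph{controlled drawing}: the curves must be produced as $x$-monotone curves whose vertical order at each abscissa is dictated by the allowable sequence (a wiring-diagram construction), so that two curves exchange their vertical order exactly once, namely at their unique crossing. Your sketch gestures at the sweep but never ties the drawing of the curves to the sequence in a way that bounds the number of exchanges. Relatedly, the claims that acyclicity ensures ``the sweep meets each pseudoline in exactly one point'' and that ``every cocircuit is encountered as a full reversal'' are not correct as stated: a transversal meets each pseudoline once because it is a transversal of an affine arrangement, and a single move of an allowable sequence may consist of several simultaneous disjoint reversals, each corresponding to a different cocircuit. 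These points need to be repaired before the argument is complete.
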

\begin{figure}[h]
\begin{center}
\includegraphics[bb= 0 0 344 314, scale=0.30]{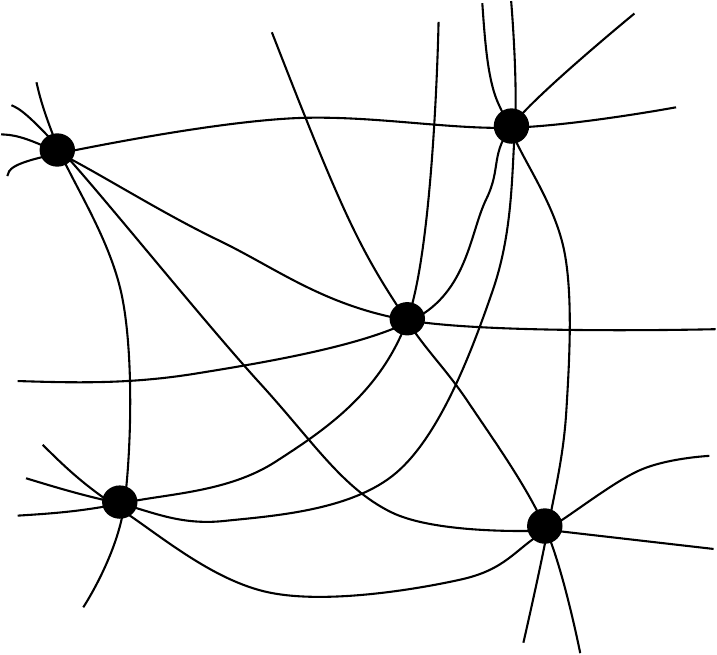}
\end{center}
\caption{ A pseudoconfiguration of points in $\mathbb{R}^2$}
\label{fig:ppc}
\end{figure}
Here, the assumption that ${\cal M}$ is acyclic is not important, because non-acyclic oriented matroids can be represented as {\it signed pseudoconfigurations of points},
where each point has a sign.

The notion of pseudoconfigurations of points in the $d(\geq 3)$-dimensional Euclidean space can be introduced analogously, but not every acyclic oriented matroid of rank $d+1$
can be represented as a pseudoconfiguration of points in the $d$-dimensional Euclidean space. For further details, see \cite[Section 5.3]{BLSWZ99}.
However, oriented matroids of general rank can be represented as pseudosphere arrangements~\cite{FL78}, with further details presented in \cite[Section 5.2]{BLSWZ99}.

\section{Definition of degree-$k$ oriented matroids}
In this section, we introduce degree-$k$ oriented matroids as a combinatorial model, which captures the essential combinatorial properties of configurations of points and the graphs of polynomial functions of degree $k$.
To do so, we first review some results that were introduced by Eli\'a\v{s} and Matou\v{s}ek~\cite{EM13}.

Eli\'a\v{s} and Matou\v{s}ek~\cite{EM13} introduced the notion of $k$th-order monotonicity, which is a generalization of the usual notion of monotonicity, described as follows.
(Because it is more convenient in our context  to reinterpret the $(k+1)$st-order monotonicity of   Eli\'a\v{s} and Matou\v{s}ek as the $k$th-order monotonicity, the following definitions are slightly different from the originals.)
Let $P= \{ p_1,\dots,p_n \}$ be a point set in the plane with $x(p_1)  < \cdots < x(p_n)$.
We assume that $P$ is in {\it $k$-general position}, i.e., no $k+2$ points of $P$ lie on the graph of a polynomial function of degree at most $k$.
Furthermore, we define a $(k+2)$-tuple of points in $P$ to be {\it positive} (resp. {\it negative}) if they lie on the graph of a function whose ($k+1$)st order derivative is everywhere nonnegative (resp. nonpositive).
A subset $S \subset [n]$ is said to be {\it $k$th-order monotone} if its $(k+2)$-tuples are either all positive or all negative. 
This notion can be stated in an alternative manner using the map $\chi_P^k : [n]^{k+2} \rightarrow \{ +1,-1,0\}$, defined as follows. 
\[ \chi_P^k (i_1,\dots,i_{k+2}) := {\rm sign}(N_{i_1,\dots,i_{k+1}}(p_{i_{k+2}})),\]
where $N_{i_1,\dots,i_{k+1}}$ is the Newton interpolation polynomial of the points $p_{i_1},\dots,p_{i_{k+1}}$, i.e., 
$y = N_{i_1,\dots,i_{k+1}}(x)$ is the unique polynomial function of  degree $k$ whose graph passes through the points $p_{i_1},\dots,p_{i_{k+1}}$.
Under this definition, a subset $S$ is $k$th-order monotone if and only if 
$\chi_P^k (s_1,\dots,s_{k+2}) = +1$ for all $(s_1,\dots,s_{k+2}) \in \Lambda (S,k+2)$ (see \cite[Lemma 2.4]{EM13}).
This map contains information on which side of the graph of the polynomial  function of degree $k$ determined by $p_{i_1},\dots,p_{i_{k+1}}$ the point $p_{i_{k+2}}$ lies.
In other words, the map $\chi_P^k$ contains information regarding the partitions of $P$ by graphs of polynomial functions of degree $k$.
\\
\\
It is shown in \cite{EM13} that the map $\chi_P^k$ can be computed in terms of determinants of a higher-dimensional space.
\begin{prop}{\rm ($(k+2)$-dimensional linear representability~\cite[Lemma 5.1]{EM13})}\\
Let $f : \mathbb{R}^2 \rightarrow \mathbb{R}^{k+2}$ be the map that sends each point $(x,y) \in \mathbb{R}^2$ to $(1,x,\dots,x^k,y) \in \mathbb{R}^{k+2}$.
Then, we have
\begin{align*}
\chi_P^k (i_1,\dots,i_{k+2}) = {\rm sign}(\det (f(p_{i_1}),\dots,f(p_{i_{k+2}}))) \text{ for all $(i_1,\dots,i_{k+2}) \in [n]^{k+2}$},
\end{align*}
i.e., the map $\chi_P^k$ is a chirotope of an oriented matroid of rank $k+2$.
\label{prop:lifting}
\end{prop}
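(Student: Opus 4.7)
The plan is to compute $\det(f(p_{i_1}), \ldots, f(p_{i_{k+2}}))$ by elementary row reduction and match the result to the Lagrange-interpolation-based definition of $\chi_P^k$. As a first step I would reduce to the case $i_1 < i_2 < \cdots < i_{k+2}$: both sides vanish on tuples with repeated indices (the determinant by having two equal columns, $\chi_P^k$ by the alternation convention inherited from such a tuple collapsing the Lagrange interpolation data), and both transform by the sign of the permutation under reordering, so the general identity follows from this ordered case. Since by hypothesis $x(p_1) < \cdots < x(p_n)$, the assumption $i_1 < \cdots < i_{k+2}$ gives $x(p_{i_1}) < \cdots < x(p_{i_{k+2}})$.

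Next, let $M$ denote the $(k+2)\times(k+2)$ matrix whose $j$-th column is $f(p_{i_j}) = (1, x(p_{i_j}), x(p_{i_j})^2, \ldots, x(p_{i_j})^k, y(p_{i_j}))^T$, and let $L(x) = \sum_{l=0}^k c_l x^l$ be the Lagrange polynomial $L_{i_1,\dots,i_{k+1}}$ of degree at most $k$, which is well-defined because the first $k+1$ abscissae are pairwise distinct. The row operation that subtracts $\sum_{l=0}^k c_l R_{l+1}$ from the last row $R_{k+2}$ leaves $\det M$ invariant and, by the defining property of $L$, turns the new last row into $(0, 0, \ldots, 0, y(p_{i_{k+2}}) - L(x(p_{i_{k+2}})))$. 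Expanding along this row yields
\[ \det M \;=\; \bigl(y(p_{i_{k+2}}) - L(x(p_{i_{k+2}}))\bigr)\cdot\!\!\prod_{1 \le r < s \le k+1}\!\!(x(p_{i_s}) - x(p_{i_r})), \]
where the second factor is the Vandermonde determinant of $x(p_{i_1}), \ldots, x(p_{i_{k+1}})$, which is strictly positive because those abscissae are strictly increasing. Taking signs gives $\text{sign}(\det M) = \text{sign}(y(p_{i_{k+2}}) - L(x(p_{i_{k+2}}))) = \chi_P^k(i_1, \ldots, i_{k+2})$ in the non-degenerate branch. In the degenerate branch, the same row reduction (now with $c_k = 0$) still applies, and the resulting linear dependence among the rows of $M$ forces $\det M = 0$, matching the value $0$ on the left-hand side.

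Finally, the ``i.e.'' assertion that $\chi_P^k$ is a chirotope of rank $k+2$ is then immediate from the identification with a determinant: axiom (B1) holds because the Vandermonde rows of $f(P)$ guarantee some maximal minor is nonzero, (B2) holds because the determinant is alternating in its column indices, and (B3) is the sign reduction of the Grassmann-Pl\"ucker relations applied to $(f(p_e))_{e \in [n]}$. The main obstacle is essentially bookkeeping: carefully tracking that the Vandermonde factor is positive under the ordering convention and cleanly justifying the sign on both sides in the degenerate case. There is no deep difficulty, since the identity reduces to a single cofactor expansion driven by the defining property of Lagrange interpolation.
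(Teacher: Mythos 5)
Your main computation is correct and is essentially the intended argument; note that the paper itself gives no proof here, citing \cite{EM13} for the determinant identity and deferring the chirotope claim to the more general topological argument of Proposition~\ref{prop:pp_def_om}, so a self-contained linear-algebra proof like yours is exactly what is called for. The row operation $R_{k+2}\mapsto R_{k+2}-\sum_{l=0}^{k}c_lR_{l+1}$ does produce the last row $(0,\dots,0,\,y(p_{i_{k+2}})-L(x(p_{i_{k+2}})))$, the cofactor sign is $(-1)^{2(k+2)}=+1$, and the remaining minor is the Vandermonde determinant of the strictly increasing abscissae, hence positive; so for increasing indices ${\rm sign}(\det M)={\rm sign}(y(p_{i_{k+2}})-L(x(p_{i_{k+2}})))$, and the reduction to increasing tuples via alternation is fine.

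The genuine gap is your treatment of the degenerate branch. You assert that when $p_{i_1},\dots,p_{i_{k+1}}$ lie on a graph of degree at most $k-1$ (so $c_k=0$), ``the resulting linear dependence among the rows of $M$ forces $\det M=0$.'' That is false: your own identity $\det M=\bigl(y(p_{i_{k+2}})-L(x(p_{i_{k+2}}))\bigr)\cdot\prod_{r<s\le k+1}(x(p_{i_s})-x(p_{i_r}))$ holds verbatim in that case, the Vandermonde factor is still nonzero, and the first $k+1$ rows of $M$ always have full rank $k+1$ because the abscissae are distinct. A dependence involving the last row occurs precisely when all $k+2$ points lie on a common graph of degree at most $k$, i.e.\ when $y(p_{i_{k+2}})=L(x(p_{i_{k+2}}))$ --- a condition unrelated to $\deg L\le k-1$. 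So under a literal reading of the paper's displayed definition of $\chi_P^k$, the claimed identity actually fails on tuples whose first $k+1$ points lie on a lower-degree graph while $p_{i_{k+2}}$ is off the graph of $L$; to close the proof you must either work in the ($k+1$)-general-position setting or read the zero case as ``the $k+2$ points lie on a common graph of degree at most $k$'' (the reading forced by the paper's own characterization of uniformity), in which case the zero case is exactly $y(p_{i_{k+2}})=L(x(p_{i_{k+2}}))$ and is already subsumed by your sign formula. A smaller issue of the same kind: (B1) is not guaranteed by ``the Vandermonde rows'' alone --- if all $n$ points lie on a single graph of degree at most $k$, every maximal minor of $(f(p_e))_{e\in[n]}$ vanishes --- so the rank-$(k+2)$ assertion also tacitly requires this nondegeneracy.
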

\begin{proof}
The proof is not difficult, and we refer the reader to \cite{EM13}. The latter statement will be proved in Proposition~\ref{prop:pp_def_om} in a more general setting.
\end{proof}
\\
\\
Here, one can observe that the map $\chi_P^k$ actually arises from a single element lifting of a cyclic hyperplane arrangement in $\mathbb{R}^k$. We will see a generalization of this fact later (see Section~\ref{sec:conc}).

Eli\'a\v{s} and Matou\v{s}ek~\cite{EM13} additionally observed the following useful property.
\begin{prop}{\rm (Transitivity~\cite[Lemma 2.5]{EM13})}\\
If $\chi_{P}^k(I_{k+3} \setminus \{ i_{k+3} \}) = \chi_{P}^k(I_{k+3} \setminus \{ i_1 \})$ for $I_{k+3}:=(i_1,\dots,i_{k+3}) \in \Lambda ([n],k+3)$,
then we have $\chi_{P}^k(I) = \chi_{P}^k(I_{k+3} \setminus \{ i_{k+3} \})$ for all $I \in \Lambda(I_{k+3},k+2)$. 
\label{prop:tans}
\end{prop}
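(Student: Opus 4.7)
The plan is to translate each value $\chi_P^k(I_{k+3}\setminus\{i_j\})$ into the sign of a Newton divided difference of the $y$-coordinates and then exhibit each of the interior divided differences as a convex combination of the two endpoint ones coming from the hypothesis. By the preceding $(k+2)$-dimensional linear representability proposition, $\chi_P^k(j_1,\ldots,j_{k+2})={\rm sign}\det(f(p_{j_1}),\ldots,f(p_{j_{k+2}}))$. Expanding this determinant along its last (i.e.\ $y$-) row and recognizing the $(k+1)\times(k+1)$ cofactors as Vandermonde determinants gives the classical factorization
\[
\det\bigl(f(p_{j_1}),\ldots,f(p_{j_{k+2}})\bigr)\;=\;\prod_{1\le s<t\le k+2}\bigl(x(p_{j_t})-x(p_{j_s})\bigr)\cdot[y_{j_1},\ldots,y_{j_{k+2}}],
\]
where $[y_{j_1},\ldots,y_{j_{k+2}}]$ is the Newton divided difference of the $y$-values with respect to the nodes $x(p_{j_s})$. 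Since the $x$-coordinates strictly increase along $[n]$, the Vandermonde factor is strictly positive on every tuple in $\Lambda([n],k+2)$, so, writing $S:=\bar{I}_{k+3}$ and abbreviating the divided difference indexed by $T\subseteq S$ as $[T]$, we obtain $\chi_P^k(I_{k+3}\setminus\{i_j\})={\rm sign}\bigl([S\setminus\{i_j\}]\bigr)$ for every $j=1,\ldots,k+3$.

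Next I would apply the standard divided-difference recursion $[S]=\bigl([S\setminus\{i_a\}]-[S\setminus\{i_b\}]\bigr)/\bigl(x(p_{i_b})-x(p_{i_a})\bigr)$, valid for any two distinct $i_a,i_b\in S$, twice: once with $(a,b)=(1,k+3)$ and once with $(a,b)=(j,k+3)$, and then eliminate $[S]$ between the two equations. A short computation yields the explicit identity
\[
[S\setminus\{i_j\}]\;=\;(1-\lambda_j)\,[S\setminus\{i_{k+3}\}]+\lambda_j\,[S\setminus\{i_1\}],\qquad \lambda_j:=\frac{x(p_{i_{k+3}})-x(p_{i_j})}{x(p_{i_{k+3}})-x(p_{i_1})}.
\]
Because $x(p_{i_1})\le x(p_{i_j})\le x(p_{i_{k+3}})$, which is the single point at which the increasing order of the $x$-coordinates is used, we have $\lambda_j\in[0,1]$, so the right-hand side is a genuine convex combination.

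Finally, the hypothesis tells us that the two endpoint divided differences $[S\setminus\{i_1\}]$ and $[S\setminus\{i_{k+3}\}]$ carry a common sign $\sigma\in\{+1,-1,0\}$, and any convex combination of two real numbers of common sign $\sigma$ inherits that sign (the case $\sigma=0$ is trivial; for $\sigma=\pm1$ neither summand changes sign and the nonnegative weights sum to one). Combining this with the sign identity from the first step yields $\chi_P^k(I_{k+3}\setminus\{i_j\})=\chi_P^k(I_{k+3}\setminus\{i_{k+3}\})$ for every $j$, completing the argument. The step I expect to be the most delicate is the derivation of the convex-combination identity with $\lambda_j\in[0,1]$: one must choose the recursion instances so that eliminating $[S]$ leaves nonnegative coefficients, and this works precisely because the hypothesis pairs the two extreme indices $i_1$ and $i_{k+3}$ rather than an arbitrary pair.
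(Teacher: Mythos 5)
Your proof is correct, and it takes a genuinely different route from the paper's. The paper argues geometrically and only for $k=2$: from $\chi^2_P(i_1,i_2,i_3,i_4)=\chi^2_P(i_2,i_3,i_4,i_5)=+1$ it observes that the graphs $y=L_{i_1,i_2,i_3}(x)$ and $y=L_{i_2,i_3,i_4}(x)$ meet only at $p_{i_2}$ and $p_{i_3}$, so the latter lies above the former for $x>x(p_{i_3})$, forcing $p_{i_5}$ above both; the general $k$ is declared a straightforward generalization, with the general proof deferred to \cite{EM13}. You instead argue algebraically through the $(k+2)$-dimensional linear-representability proposition: the factorization of $\det(f(p_{j_1}),\dots,f(p_{j_{k+2}}))$ as the Vandermonde of the $x$-coordinates times the divided difference $[y_{j_1},\dots,y_{j_{k+2}}]$ is the standard identity, the Vandermonde factor is indeed positive on increasing tuples, and eliminating $[S]$ between the two instances of the recursion does yield $[S\setminus\{i_j\}]=(1-\lambda_j)[S\setminus\{i_{k+3}\}]+\lambda_j[S\setminus\{i_1\}]$ with $\lambda_j\in[0,1]$ --- equivalently, $[S\setminus\{i_j\}]=[S\setminus\{i_{k+3}\}]+[S]\bigl(x(p_{i_{k+3}})-x(p_{i_j})\bigr)$ is affine in $x(p_{i_j})$ and interpolates the two endpoint values. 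Your argument is uniform in $k$ and fully rigorous where the paper gestures at the generalization; as a bonus, the same affine dependence immediately yields the $(k+3)$-locally unimodal property of the following proposition (when $[S]\neq 0$ the map $j\mapsto[S\setminus\{i_j\}]$ is strictly monotone, so at most one value vanishes; when $[S]=0$ all $k+3$ values coincide). What the paper's geometric argument buys in exchange is that it foreshadows the intersection-counting reasoning used later for $k$-intersecting pseudoconfigurations of points, where no polynomial interpolation or divided differences are available.
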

\begin{proof}
This proposition is proved in \cite{EM13}, using Newton interpolation polynomials.
Here, we provide a geometric proof for $k=2$. The generalization of this is straightforward.

Without loss of generality, we assume that $\chi^2_P(i_1,i_2,i_3,i_4) = +1$, which means that the point $p_{i_4}$ is above the graph $y=N_{i_1,i_2,i_3}(x)$.
Note that the graphs  $y=N_{i_1,i_2,i_3}(x)$ and $y=N_{i_2,i_3,i_4}(x)$ intersect at $p_{i_2}$ and $p_{i_3}$, and that they do not intersect elsewhere.
This indicates that the graph $y=N_{i_2,i_3,i_4}(x)$ lies above the graph $y=N_{i_1,i_2,i_3}(x)$ for $x > x(p_{i_3})$.
The point $p_{i_5}$ is above the graph $y=N_{i_2,i_3,i_4}(x)$ by the assumption $\chi^2_P(i_2,i_3,i_4,i_5)=+1$,  and it follows that $p_{i_5}$ lies above 
the graph $y=N_{i_1,i_2,i_3}(x)$, which implies that $\chi^2_P(i_1,i_2,i_3,i_5)=+1$.
The same argument can be applied when either of the graphs $y=N_{i_1,i_2,i_4}(x)$ or  $y=N_{i_1,i_3,i_4}(x)$ are considered instead of $y=N_{i_1,i_2,i_3}(x)$.
\begin{figure}[h]
\begin{center}
\includegraphics[scale=0.15, bb=0 0 947 816]{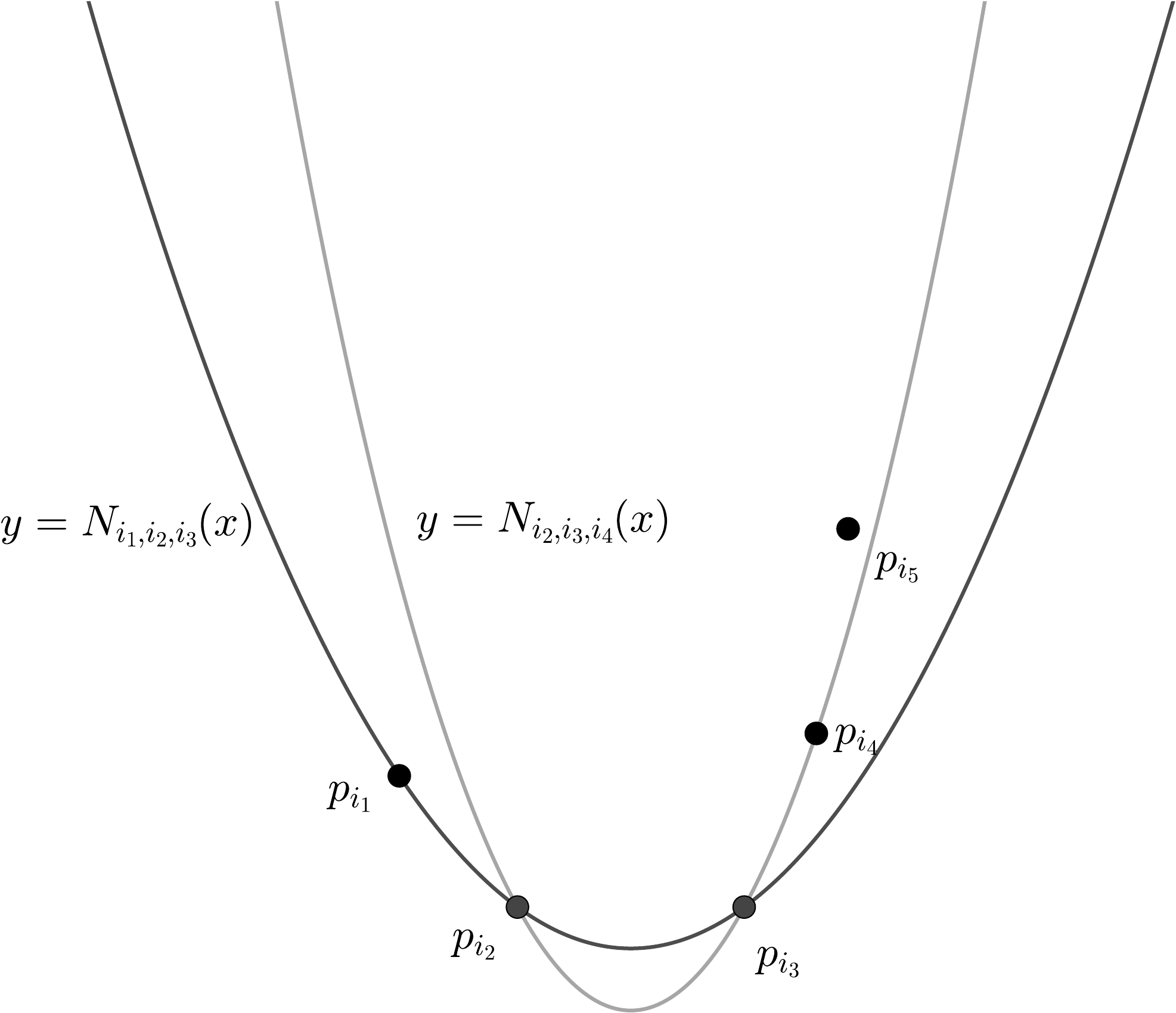}
\end{center}
\caption{ Configuration with $\chi^2_P(i_1,i_2,i_3,i_4) = +1$ and $\chi^2_P(i_2,i_3,i_4,i_5)=+1$}
\label{fig:trans}
\end{figure}
\end{proof}
\\
\\
In \cite{EM13}, this property is used to prove a new generalization of the Erd\H{o}s-Szekeres theorem, which states that
 there is always a $k$-monotone subset of size $\Omega(n)$ in any point set $P$ of size ${\rm twr}_k(n)$ in $k$-general position,
where ${\rm twr}_k(x )$ is the $k$th tower function.
Our first observation is that $\chi_{P}^k$ actually admits the following slightly stronger property.
\begin{prop}{\rm (($k+3$)-locally unimodal property)}\\
For any $\lambda \in \Lambda ([n],k+3)$ and $\mu_1,\mu_2,\mu_3 \in \Lambda (\bar{\lambda}, k+2)$ with $\mu_1 < \mu_2 < \mu_3$ (lexicographic order, i.e.,
there exist $a,b,c \in \bar{\lambda}$ ($a < b < c$) with $\mu_1 = \lambda \setminus \{ c \}, \mu_2 = \lambda \setminus \{ b \}, \mu_3 = \lambda \setminus \{ a \}$),
it holds that if $\chi_P^k (\mu_1) = - \chi_P^k (\mu_2)$, then $\chi_P^k (\mu_3) = \chi_P^k (\mu_2)$.
\end{prop}
\begin{proof}
The case with $k=2$ can easily be proved by careful examination of the proof of Proposition~\ref{prop:tans}.
We will prove this proposition later in a more general context (Proposition~\ref{prop:pp_def_om}), and so we omit the full proof here.
\end{proof}
\\
\\
In the next section, we will prove that the above-mentioned properties are in some sense {\it all} combinatorial properties that can be proved using some natural geometric properties.
This motivates us to consider combinatorial structures characterized by those properties.
\begin{defn}{\rm (Degree-$k$ oriented matroids)}\\
Let $E$ be a finite ordered set.
We say that a rank $k+2$ uniform oriented matroid ${\cal M} = (E, \{ \chi, -\chi \})$ is called a {\it degree-$k$  uniform oriented matroid} if it satisfies the following condition.  
For any $\lambda \in \Lambda (E,k+3)$ and $\mu_1,\mu_2,\mu_3 \in \Lambda (\bar{\lambda}, k+2)$ with $\mu_1 < \mu_2 < \mu_3$ (lexicographic order),
it holds that if $\chi (\mu_1) = - \chi (\mu_2)$, then $\chi (\mu_3) = \chi (\mu_2)$.
\label{def:degree-k}
\end{defn}
One can generalize  the definition of degree-$k$ oriented matroids to the non-uniform case as follows:
a degree-$k$ oriented matroid is a rank $k+2$ oriented matroid ${\cal M} = (E, \{ \chi, -\chi \})$ that satisfies the condition in Definition~\ref{def:degree-k} and in addition satisfies the condition `if $\chi (\mu_2) = 0$, then $\chi (\mu_1) = -\chi (\mu_3)$''. 
For the sake of simplicity, we only consider degree-$k$ uniform oriented matroids in the remaining part of the paper.

\section{Topological representation theorem for degree-$k$ oriented matroids}
In this section, we prove that degree-$k$ oriented matroids can always be represented by the following generalization of 
configurations formed by points and the graphs of polynomial functions of degree $k$.
\begin{defn}($k$-intersecting pseudoconfiguration of points)\\
\label{def:pp_configuration}
A pair $PP=(P,L)$ of a point set $P=\{ p_1,\dots,p_n \}$ ($x(p_1) < \cdots < x(p_n)$) in the plane and a collection $L$ of $x$-monotone Jordan curves is called a
{\it $k$-intersecting  pseudoconfiguration of points}  if the following conditions hold:
\begin{list}{}{}
\item[(PP1)] For any $l \in L$, there exist at least $k+1$ points of $P$ lying on $l$.
\item[(PP2)] For any $k+1$ points of $P$, there exists a unique curve $l \in L$ passing though each point.
\item[(PP3)] For any $l_1,l_2 \in L$ ($l_1 \neq l_2$), $l_1$ and $l_2$ intersect (transversally) at most $k$ times. 
\end{list} 
\end{defn}
Here, a Jordan curve is called {\it $x$-monotone} if it intersects with any vertical line at most once.
For $l \in L$, we denote $P(l) := P \cap l$.
If $|P(l)| = k+1$ for all $l \in L$, then the configuration $PP$ is said to be {\it in general position}.
When $PP$ is a configuration  in general position, we denote  the curve determined by points $p_{i_1},\dots,p_{i_{k+1}}$ by $l_{i_1,\dots,i_{k+1}}$.
An $x$-monotone Jordan curve $l$ can be written as $l = \{ (x,f(x)) \mid x \in \mathbb{R}\}$, for some continuous function $f: \mathbb{R} \rightarrow \mathbb{R}$.
We define $l^+ := \{ (x,y) \mid y > f(x) \}$, $l^- := \{ (x,y) \mid y < f(x) \}$, and $\overline{l^{+}} := l^+ \cup l$, $\overline{l^{-}} := l^- \cup l$.
We call $l^+$ (resp. $l^-$) the {\it $(+1)$-side} (resp. {\it $(-1)$-side}) of $l$.
For $Q \subset P$, the {\it subconfiguration} induced by $Q$ is a $k$-intersecting pseudoconfiguration $(Q,L|_Q)$,
where $L|_Q := \{ l \in L \mid P(l) \subset  Q\}$.
For $I \subset \mathbb{R}$, we denote $H_I := I \times \mathbb{R} \subset \mathbb{R}^2$.
\\
\begin{figure}[h]
\begin{center}
\includegraphics[bb= 0 0 405 313, scale=0.30]{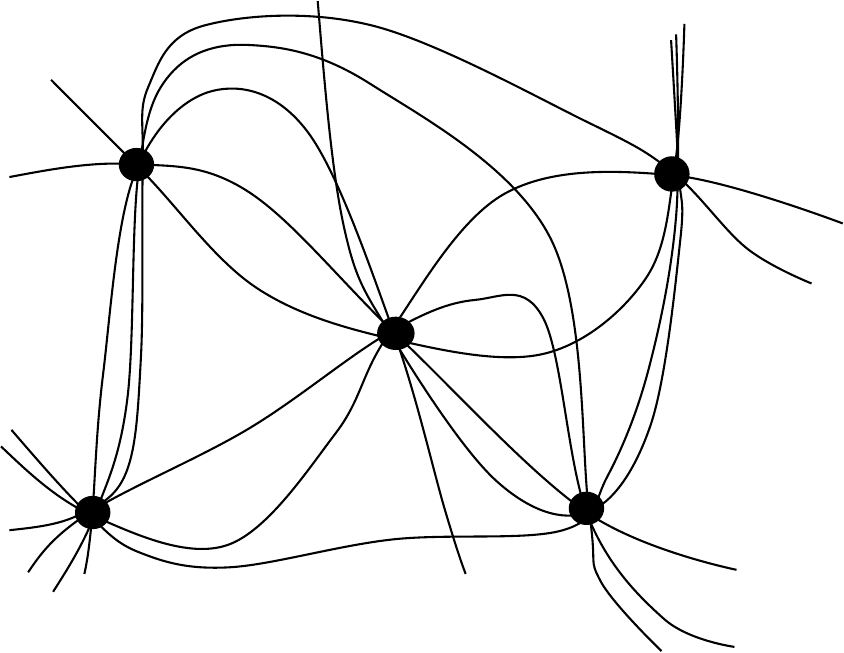}
\end{center}
\caption{ A $2$-intersecting pseudoconfiguration of points}
\label{fig:kpl}
\end{figure}
\\
For each $k$-intersecting pseudoconfigurations of points,  the possible partitions can be described as a function similarly to the case of configurations formed by points and the graphs of polynomial functions of degree $k$.  
To define and analyze this, we require the following notion.
\begin{defn}(Lenses)\\
Let $(P,L)$ be a $k$-intersecting pseudoconfiguration of points.
A {\it lens} (or {\it full lens}) of $(P,L)$ is a region that can be represented as $\overline{l_1^+} \cap \overline{l_2^-} \cap H_{[x_1,x_2]}$, where $l_1, l_2 \in L$, and $x_1$ and $x_2$ are the $x$-coordinates of two consecutive
intersection points of $l_1$ and $l_2$.
Note that $l_1 \cap l_2 \cap H_{[x_1,x_2]}$ consists of two points. We call these the {\it end points} of $R$. The end point with the greater (resp. smaller) 
$x$-coordinate is called the right (resp. left) end point, and is denoted by $e_r(R)$ (resp. $e_l(R)$).

A {\it half lens} of $(P,L)$ is a region represented as $\overline{l_1^+} \cap \overline{l_2^-} \cap H_{[x_1,\infty)}$ 
or $\overline{l_1^+} \cap \overline{l_2^-} \cap H_{(-\infty,x_2]}$, where $l_1, l_2 \in L$, and $x_1$ and $x_2$ are the $x$-coordinates of 
the leftmost and rightmost intersection points of $l_1$ and $l_2$, respectively.
We call a full or half lens $R$ an {\it empty lens} if $P \cap R = \emptyset$.
\end{defn}

Given a $k$-intersecting pseudoconfiguration of points $PP=(P,L)$  in general position (or more generally a configuration satisfying only (PP1) and (PP2)), 
we define a map $\chi_{PP} : [n]^{k+2} \rightarrow \{ +1,-1,0\}$ as follows.
\begin{itemize}
\item For $\lambda_1,\dots,\lambda_{k+2} \in [n]$ with $\lambda_1 < \cdots < \lambda_{k+1}$, we have  
\begin{align*}
\chi_{PP} (\lambda_1, \dots, \lambda_{k+2}) =
\begin{cases}
+1 & \text{ if $p_{\lambda_{k+2}} \in l_{\lambda_{1},\dots,\lambda_{k+1}}^+$,} \\
-1 & \text{ if $p_{\lambda_{k+2}} \in l_{\lambda_{1},\dots,\lambda_{k+1}}^-$.} 
\end{cases}
\end{align*}
\item $\chi_{PP} (\lambda_1,\dots,\lambda_{k+2}) = 0$ if $\lambda_i = \lambda_j$ for some $i, j \in [k+2]$ ($i \neq j$).
\item $\chi_{PP} (\lambda_{\sigma (1)}, \dots, \lambda_{\sigma(k+2)}) = {\rm sgn}(\sigma )\chi_{PP} (\lambda_1,\dots,\lambda_{k+2})$ for any $\lambda_1,\dots,\lambda_{k+2} \in [n]$ and any permutation $\sigma$ on $[k+2]$.
\end{itemize}

\begin{prop}
The map $\chi_{PP}$ is well-defined.
\end{prop}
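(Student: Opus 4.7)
The plan is to verify that the three defining clauses of $\chi_{PP}$ glue into a single consistent map on $[n]^{k+2}$. The second clause (zero on tuples with a repeat) is trivially unambiguous; the first is unambiguous because in general position the curve $l_{\lambda_1,\ldots,\lambda_{k+1}}$ (unique by (PP2)) passes through exactly the $k+1$ specified points, so whenever $\lambda_{k+2}\notin\{\lambda_1,\ldots,\lambda_{k+1}\}$ the point $p_{\lambda_{k+2}}$ lies strictly in $l^+$ or $l^-$. The real content is to check that the antisymmetry clause is compatible with the first: for each fixed set $\{a_1<\cdots<a_{k+2}\}$ of $k+2$ distinct indices there are exactly $k+2$ orderings $\lambda^{(i)}:=(a_1,\ldots,\widehat{a_i},\ldots,a_{k+2},a_i)$ on which the first clause directly returns a value, and these values must respect antisymmetry.

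Write $l_i$ for the curve omitting $p_{a_i}$ and $\sigma_i\in\{+1,-1\}$ for the side of $l_i$ containing $p_{a_i}$, so that $\chi_{PP}(\lambda^{(i)})=\sigma_i$. The permutation sending $\lambda^{(k+2)}$ to $\lambda^{(i)}$ is a cycle of length $k+3-i$ and has sign $(-1)^{k+2-i}$. The required compatibility therefore reduces to the geometric statement that the sequence $(\sigma_1,\ldots,\sigma_{k+2})$ alternates in sign, i.e.\ $\sigma_{i+1}=-\sigma_i$ for $i=1,\ldots,k+1$.

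To prove this alternation I would compare $l_i$ and $l_{i+1}$ on the closed interval $I:=[x(p_{a_i}),x(p_{a_{i+1}})]$. They share exactly the $k$ points $\{p_{a_m}:m\neq i,i+1\}$, and by (PP3) these are the only points at which they meet; since $i,i+1$ are consecutive among $a_1<\cdots<a_{k+2}$, all of these shared $x$-coordinates lie strictly outside $I$. By $x$-monotonicity, $l_i$ and $l_{i+1}$ restrict to graphs of continuous functions $f_i,f_{i+1}$ over $I$, and $f_{i+1}-f_i$ has no zero on $I$, hence is of constant nonzero sign there. Evaluating at the two endpoints and using the asymmetric facts that $l_{i+1}$ passes through the left endpoint $p_{a_i}$ while $l_i$ passes through the right endpoint $p_{a_{i+1}}$ identifies this constant sign with $\sigma_i$ on the left and with $-\sigma_{i+1}$ on the right, forcing $\sigma_i=-\sigma_{i+1}$.

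The main obstacle is really the interplay between (PP3) and the sign bookkeeping at the endpoints of $I$. (PP3) is exactly what forbids $l_i$ and $l_{i+1}$ from crossing in the interior of $I$; the asymmetric incidence pattern at the two endpoints then converts the \emph{same} geometric sign on $I$ into \emph{opposite} algebraic signs $\sigma_i$ and $\sigma_{i+1}$, which is the source of the alternation.
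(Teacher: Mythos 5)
Your proof is correct, and it reaches the alternation $\sigma_{i+1}=-\sigma_i$ by a route that is organized differently from the paper's. The paper fixes the reference curve $l_{\lambda_1,\dots,\lambda_{k+1}}$ and compares it with the curve obtained by swapping in $p_{\lambda_{k+2}}$ for $p_{\lambda_{i_0+1}}$; these two curves share $k$ points, (PP3) forces them to form a chain of lenses with those points as end points, and the sign $(-1)^{k+1-i_0}=\mathrm{sgn}(\sigma)$ emerges by counting how many times the above--below relationship flips as one walks from the far right (where the relative position is known from the hypothesis on $p_{\lambda_{k+2}}$) back to the interval containing the swapped-out point. You instead compare only \emph{consecutive} curves $l_i$ and $l_{i+1}$, and exploit the fact that their $k$ shared points all have $x$-coordinates outside the closed interval $[x(p_{a_i}),x(p_{a_{i+1}})]$, so that no crossing-counting is needed at all: $f_{i+1}-f_i$ has constant sign there, and the asymmetric incidences at the two endpoints convert that single sign into $\sigma_i$ on the left and $-\sigma_{i+1}$ on the right. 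Both arguments rest on the same key ingredient --- (PP3) pins down the exact intersection set of two curves through $k$ common points --- but your adjacent-pair decomposition trades the paper's parity bookkeeping across $k-i_0$ lens end points for $k+1$ comparisons each of which is crossing-free, which is arguably cleaner and less error-prone. One small point worth making explicit: you should note that $l_i\neq l_{i+1}$ (immediate from general position, since $l_i$ contains $p_{a_{i+1}}$ but not $p_{a_i}$) before invoking (PP3), and that your reduction to the tuples $\lambda^{(i)}$ uses that these are precisely the orderings of the index set on which the first clause applies; both are easy but belong in a full write-up.
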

\begin{proof}
It suffices to show that for any permutation $\sigma$ on $[k+2]$ with $\sigma (1) < \cdots < \sigma (k+1)$ and any sign $s \in \{ +,-\}$, 
we have $p_{\lambda_{\sigma(k+2)}} \in l_{\lambda_{\sigma(1)},\dots,\lambda_{\sigma(k+1)}}^{{\rm sgn} (\sigma) \cdot s} \Leftrightarrow p_{\lambda_{k+2}} \in l_{\lambda_{1},\dots,\lambda_{k+1}}^{s}$.
First, we pick such a $\sigma$ arbitrarily. Then, we have 
\begin{align*}
\sigma (i) = 
\begin{cases}
i & \text{ if $i \leq i_0$,}\\
i+1& \text{ if $i_0+1 \leq i \leq k+1$,}\\
i_0+1 & \text{ if $i = k+2$}
\end{cases}
\end{align*}
for some $i_0 \in [k+1]$ and ${\rm sgn}(\sigma) = (-1)^{k+1-i_0}$. 
If $i_0 = k+1$, then the proposition is trivial, and thus we assume that $i_0 \neq k+1$.
Then, the curves $l_{\lambda_{1},\dots,\lambda_{k+1}}$ and $l_{\lambda_{\sigma(1)},\dots,\lambda_{\sigma(k+1)}}$ intersect 
at the points $p_{\lambda_1},\dots,p_{\lambda_{i_0}},p_{\lambda_{i_0+2}},\dots,p_{\lambda_{k+1}}$. 
By the condition of $k$-intersecting pseudoconfigurations of points, these two curves must not intersect elsewhere,
and thus the curve $l_{\lambda_{\sigma(1)},\dots,\lambda_{\sigma(k+1)}}$ must lie above $l_{\lambda_{1},\dots,\lambda_{k+1}}$ in the halfspace $H_{(x(p_{\lambda_{k+1}}), \infty)}$
if $p_{\lambda_{k+2}}$ is above $l_{\lambda_{1},\dots,\lambda_{k+1}}$.
Because the above-below relationship of $l_{\lambda_{1},\dots,\lambda_{k+1}}$ and $l_{\lambda_{\sigma(1)},\dots,\lambda_{\sigma(k+1)}}$ is reversed at each end point of each lens formed by these two curves, 
the curve $l_{\lambda_{\sigma(1)},\dots,\lambda_{\sigma(k+1)}}$ must lie above (resp. below) $l_{\lambda_{1},\dots,\lambda_{k+1}}$
in the space  $H_{(x(p_{\lambda_{i_0-1}}), x(p_{\lambda_{i_0+1}}))}$  if $k-i_0$ is even, i.e., if ${\rm sgn}(\sigma) = +1$ (resp. if $k-i_0$ is odd, i.e., if ${\rm sgn}(\sigma) = -1$).
Therefore, we have $p_{\sigma (k+2)} = p_{\lambda_{i_0}} \in l_{\lambda_{\sigma(1)},\dots,\lambda_{\sigma(k+1)}}^{{\rm sgn}(\sigma)}$.
The same discussion applies in the case that $p_{\lambda_{k+2}}$ is below $l_{\lambda_{1},\dots,\lambda_{k+1}}$.
\begin{figure}[h]
\begin{center}
\includegraphics[scale=0.30, bb = 0 0 724 274]{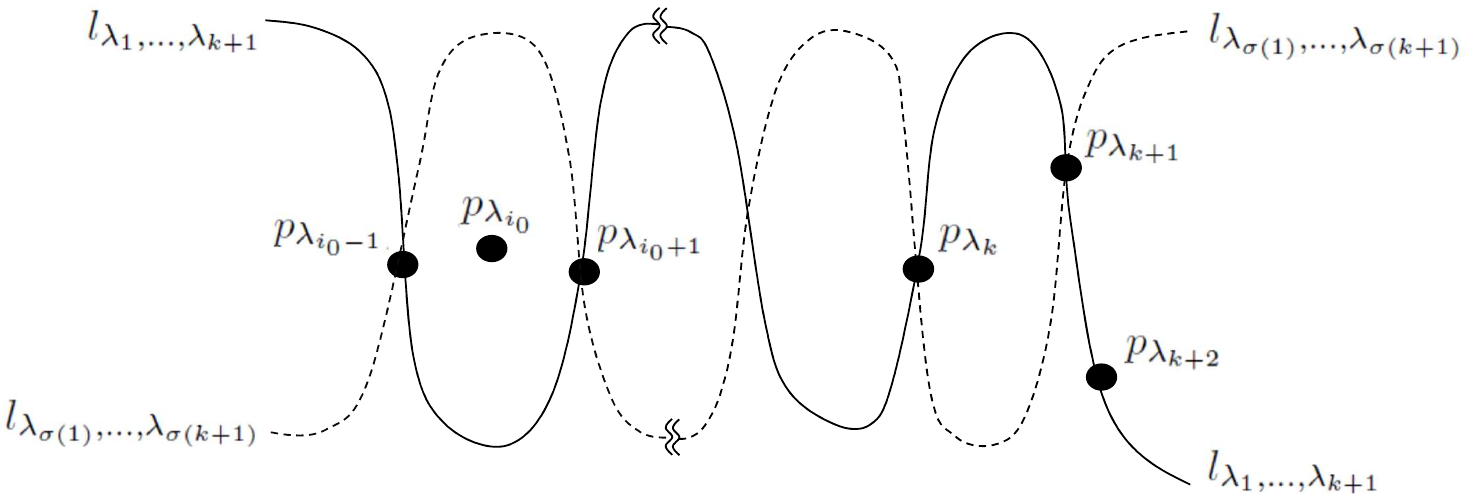}
\end{center}
\caption{$l_{\lambda_{1},\dots,\lambda_{k+1}}$ and $l_{\lambda_{\sigma(1)},\dots,\lambda_{\sigma(k+1)}}$}
\label{fig:well_def}
\end{figure}
\end{proof}

\subsection*{Step 1: $k$-intersecting pseudoconfigurations of points determine degree-$k$ oriented matroids}

\begin{prop}
For every $k$-intersecting pseudoconfiguration of points $PP=(P=\{ p_e\}_{e\in [n]},L)$  in general position, the map $\chi_{PP}$ defines a degree-$k$ uniform oriented matroid.
\label{prop:pp_def_om}
\end{prop}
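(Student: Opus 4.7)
The plan is to establish, in order, uniformity, axioms (B1) and (B2), the $(k+3)$-locally unimodal property, and (B3). Uniformity and (B1) are immediate from general position: for any increasing $(k+2)$-tuple $\lambda$, the curve $l_{\lambda_1,\dots,\lambda_{k+1}}$ contains exactly $k+1$ points of $P$ (since $|P(l)|=k+1$ for all $l\in L$), so $p_{\lambda_{k+2}}$ lies strictly on one side of it and hence $\chi_{PP}(\lambda)\neq 0$. Axiom (B2) is the sign-reversal rule built into the definition of $\chi_{PP}$, whose well-definedness has just been verified in the previous proposition.

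For the $(k+3)$-locally unimodal property, fix $\lambda = (i_1 < \cdots < i_{k+3})$ and observe that $\mu_j$, the $j$-th lex-ordered element of $\Lambda(\bar{\lambda}, k+2)$, is obtained from $\lambda$ by deleting $i_{k+4-j}$. Unpacking the definition: $\chi_{PP}(\mu_1)$ records the side of $p_{i_{k+2}}$ relative to $l_{i_1,\dots,i_{k+1}}$, while for $j\geq 2$, $\chi_{PP}(\mu_j)$ records the side of $p_{i_{k+3}}$ relative to the curve $l_s := l_{\{i_1,\dots,i_{k+2}\}\setminus\{i_s\}}$, where $s = k+4-j$. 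The geometric core is that any two such curves $l_s, l_t$ with $s\neq t$ must intersect \emph{exactly} at their $k$ common points $\{i_1,\dots,i_{k+2}\}\setminus\{i_s,i_t\}$: by (PP3) they can intersect at most $k$ times, yet they already share that many common points. Since all such shared intersection points have $x$-coordinate at most $x(p_{i_{k+2}})$, the $x$-monotone curves $l_s$ admit no further crossings on $\{x>x(p_{i_{k+2}})\}$ and are therefore totally vertically ordered there. A lens-based parity argument on the consecutive pairs $(l_s,l_{s+1})$ --- tracking how the above/below relation flips across each shared point --- shows this vertical order is monotone in $s$, and intersecting it with the vertical line $x=x(p_{i_{k+3}})$ then gives the monotone sign pattern of $\chi_{PP}(\mu_j)$ for $j\geq 2$. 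The outlier $\chi_{PP}(\mu_1)$ (involving $p_{i_{k+2}}$ rather than $p_{i_{k+3}}$) is integrated via an analogous lens analysis on $(x(p_{i_{k+1}}),x(p_{i_{k+2}}))$, yielding the full monotone ``$+,\dots,+,0,-,\dots,-$'' or negated shape.

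Axiom (B3) then follows from the standard reduction for uniform oriented matroids, which expresses (B3) as an oriented Grassmann--Pl\"ucker sign condition on small subfamilies of base tuples; such conditions translate, via the alternating-sign bookkeeping, to an alternating-sign condition on each $(k+3)$-subset. A short case analysis (splitting on whether the monotone sequence carries a $0$ and on the position of the sign transition) confirms that the monotone sign pattern established above implies the required alternating condition. Combined with (B1), (B2), and uniformity, this yields the full chirotope axioms.

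The principal obstacle is the geometric step establishing the monotone vertical order of the curves $\{l_s\}_{s=1}^{k+2}$ on $\{x>x(p_{i_{k+2}})\}$: one must carefully track the parity of above/below flips across each pair's $k$ shared intersection points, using (PP3) and $x$-monotonicity, and show that the parity bookkeeping matches the indexing $s$. Integrating $\chi_{PP}(\mu_1)$ into the resulting sign sequence, since it concerns a different point and a different interval, requires a parallel but distinct lens argument whose compatibility with the main sequence must be established.
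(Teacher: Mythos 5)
Your treatment of the $(k+3)$-locally unimodal property is essentially the paper's geometry reorganized: the paper argues by contradiction from a non-monotone triple and forces a forbidden $(k+1)$st intersection, while you establish the total vertical order of the curves $l_1,\dots,l_{k+2}$ on $\{x>x(p_{i_{k+2}})\}$ and read the signs off a vertical line. Your betweenness claim is sound --- for $s<t<u$ the curve $l_t$ meets $l_s$ and $l_u$ to the right of $p_{i_t}$ a number of times of opposite parity ($k+2-t$ versus $k+1-t$), and both $l_s$ and $l_u$ pass through $p_{i_t}$, so exactly one ends up above $l_t$ --- and the $\mu_1$ step closes quickly once you note that $l_1$ and $l_{k+2}$ last meet at $p_{i_{k+1}}$ and $p_{i_{k+2}}\in l_1$, so $\chi_{PP}(\mu_1)$ fixes which end of the monotone order $l_{k+2}$ occupies. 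Both arguments rest on the same fact that (PP3) forbids two curves through $k$ common points from meeting elsewhere.

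The genuine divergence, and the one real soft spot, is axiom (B3). The paper never derives (B3) from unimodality; it verifies the cocircuit elimination axiom (C3$'$) directly, using the $k+1$ lenses formed by $l_{\lambda}$ and $l_{\mu}$ with $|\bar{\lambda}\cap\bar{\mu}|=k$ and tracking where an arbitrary point $p_f$ sits relative to them. You instead claim (B3) follows combinatorially from the locally unimodal property. That claim is in fact true, but it is far from a ``standard reduction'': the three-term Grassmann--Pl\"ucker condition lives on a $(k+4)$-set $\bar{\sigma}\cup\{a,b,c,d\}$ while unimodality lives on $(k+3)$-sets, so you must (i) check that the reordering signs attached to the three products $\chi(\sigma,a,b)\chi(\sigma,c,d)$, $\chi(\sigma,a,c)\chi(\sigma,b,d)$, $\chi(\sigma,a,d)\chi(\sigma,b,c)$ all equal $(-1)^{n_a+n_b+n_c+n_d}$ with $n_x=|\{s\in\bar{\sigma}\mid s>x\}|$, so that they cancel and everything reduces to the sorted values $\chi_{xy}:=\chi(\mathrm{sorted}(\bar{\sigma}\cup\{x,y\}))$, and (ii) verify exhaustively that if each of the four triples $(\chi_{ab},\chi_{ac},\chi_{bc})$, $(\chi_{ab},\chi_{ad},\chi_{bd})$, $(\chi_{ac},\chi_{ad},\chi_{cd})$, $(\chi_{bc},\chi_{bd},\chi_{cd})$ has at most one sign change then $\{\chi_{ab}\chi_{cd},\,-\chi_{ac}\chi_{bd},\,\chi_{ad}\chi_{bc}\}$ contains both signs. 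Neither step appears in your write-up, and the statement they jointly prove --- that in the uniform case the $(k+3)$-locally unimodal property alone already implies the chirotope axioms --- is strong enough (it makes part of the paper's definition redundant) that it must be proved, not asserted. Either supply that lemma or do what the paper does and verify (C3$'$) from the lens picture.
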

\begin{proof}
First, we prove that the map $\chi_{PP}$ is a chirotope of an oriented matroid of rank $k+2$.
For this, it suffices to prove that the set ${\cal C}^*_{PP} := \{  \pm (\chi_{PP} (\lambda, 1), \dots, \chi_{PP} (\lambda, n)) \mid  \lambda \in [n]^{k+1} \} \setminus \{ 0 \}$ fulfills the cocircuit axioms of oriented matroids.
Clearly, Axioms (C0)--(C2) are satisfied and we need only verify Axiom (C3).
Because we have $|X^0| = k+1$ for all $X \in {\cal C}^*_{PP}$, it suffices to verify Axiom (C3').
Take $\lambda, \mu \in \Lambda ([n],k+1)$ with $|\bar{\lambda} \cap \bar{\mu} | = k$.
Let $X,Y \in {\cal C}^*_{PP}$  be sign vectors that correspond to $l_{\lambda}$ and $l_{\mu}$, respectively ($X$ and $Y$ are determined uniquely up to a sign reversal).
Take an $e \in (X^+ \cap Y^-) \cup (X^- \cap Y^+)$ and
any $f_0 \in  (X^+ \cap Y^+) \cup (X^- \cap Y^-)$, and let $Z \in {\cal C}^*_{PP}$ be the sign vector  with $Z(f_0) = X(f_0)$ that corresponds to $l_{\bar{\lambda} \cap \bar{\mu} \cup \{ e \}}$. 
We verify that $Z$ is a required cocircuit in Axiom (C3').
Note that $l_{\lambda}$ and $l_{\mu}$ form $k+1$ lenses (see Figure \ref{fig:pp_def_om}).
Because $l_{\bar{\lambda} \cap \bar{\mu} \cup \{ e \}}$ already intersects $k$ times with each of the curves $l_{\lambda}$ and $l_{\mu}$  at the points with indices in $\bar{\lambda} \cap \bar{\mu}$, 
it cannot intersect with $l_{\lambda}$ or $l_{\mu}$ elsewhere.
Therefore, if $p_e$ is contained inside of one of the lenses, then the whole of $l_{\bar{\lambda} \cap \bar{\mu} \cup \{ e \}}$ must lie in the lenses.
Take any $f \in [n]$ with $X(f) = Y(f) \neq 0$. Then, $p_f$ lies outside of the lenses formed by $l_{\lambda}$ and $l_{\mu}$,
and thus $p_f$ and $p_{f_0}$ lie on the same side of $l_{\bar{\lambda} \cap \bar{\mu} \cup \{ e \}}$.
If $p_e$ is outside of the lenses, then the whole of $l_{\bar{\lambda} \cap \bar{\mu} \cup \{ e \}}$ must lie outside of the lenses (except for the end points).
Points $p_f$ with $X(f) = Y(f) \neq 0$ corresponds to points inside of the lenses and a similar discussion shows that $X(f) = Z(f)$.
Therefore, we have $Z^+ \supset X^+ \cap Y^+$ and $Z^- \supset X^- \cap Y^-$. 

\begin{figure}[h]
\begin{center}
\includegraphics[scale=0.35, bb= 0 0 550 279]{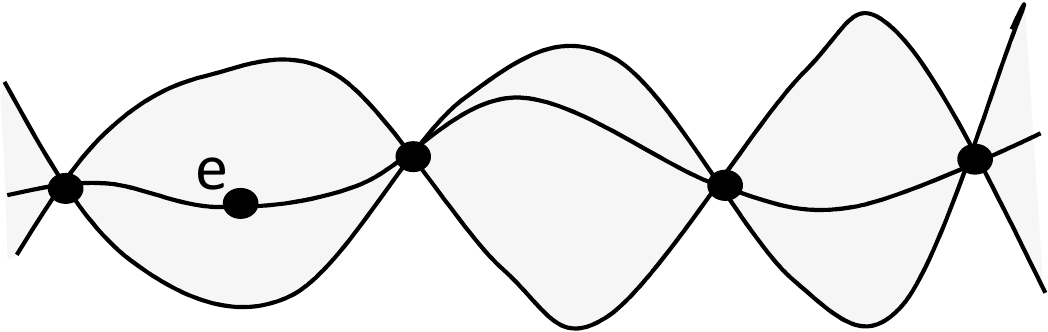}
\includegraphics[scale=0.35, bb= 0 0 550 279]{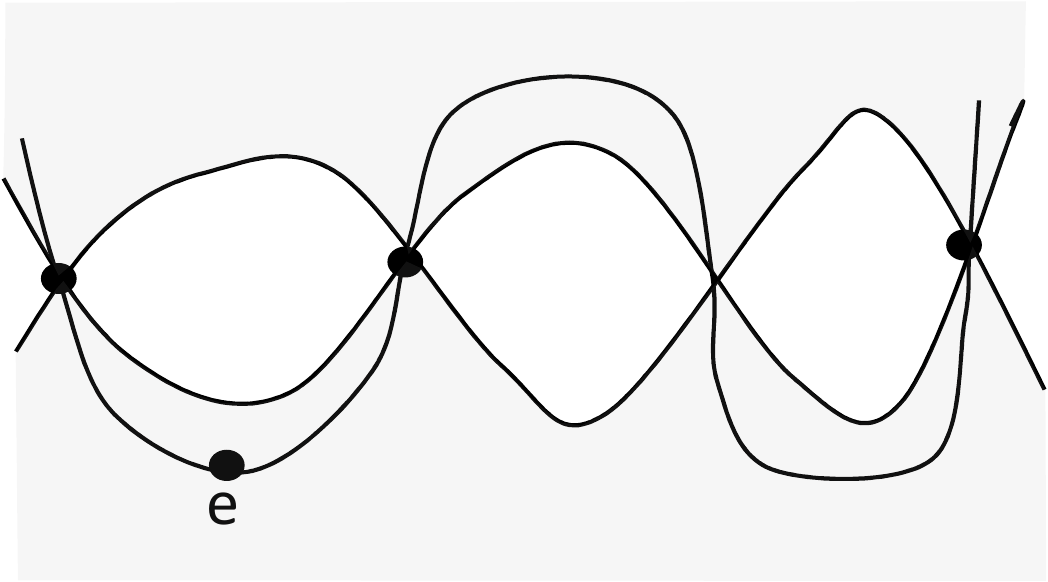}
\end{center}
\caption{ $l_{\lambda}$, $l_{\mu}$, and $l_{\bar{\lambda} \cap \bar{\mu} \cup \{ e \}}$}
\label{fig:pp_def_om}
\end{figure}

Next, we confirm the ($k+3$)-locally unimodal property.
Suppose that there exist $\lambda \in \Lambda ([n],k+3)$ and $\nu_1,\nu_2,\nu_3 \in \Lambda(\bar{\lambda}, k+2)$ such that $\nu_1 < \nu_2 < \nu_3$
and $\chi_{PP} (\nu_1) = \chi_{PP} (\nu_2) = +1$, $\chi_{PP} (\nu_3) = -1$.
Let $\mu = \nu_1 \cap \nu_2 \cap \nu_3 \in \Lambda (\bar{\lambda}, k)$ and $a,b,c$ ($a < b < c$) be the integers such that 
$\bar{\nu_1} = \bar{\mu} \cup \{ a,b \}$, $\bar{\nu_2} = \bar{\mu} \cup \{ a,c \}$, and $\bar{\nu_3} = \bar{\mu} \cup \{ b,c \}$.
Let $i_a$ be the integer such that $\mu_{i_a} < a < \mu_{i_a+1}$, where we assume that $\lambda_0 = -\infty$ and $\lambda_{k+4} = \infty$.
Define integers $i_b$ and $i_c$ similarly.
Since $\chi_{PP} (\nu_1) = \chi_{PP} (\nu_3) =  +1$,
the point $p_b$ is on the $(-1)^{k+1-i_b}$-side of $l_{\mu \cup \{ a\}}$ and on the $(-1)^{k+2-i_b}$-side of $l_{\mu \cup \{ c\}}$.
Because $l_{\mu \cup \{ a\}}$ and $l_{\mu \cup \{ c\}}$ must not intersect more than $k$ times, the curves $l_{\mu \cup \{ a\}}$ and $l_{\mu \cup \{ c\}}$ form lenses with
end points $p_{\mu_1}, \dots, p_{\mu_k}$,
and $l_{\mu \cup \{ b\}}$ must lie inside of these lenses.
Now, we remark that the point $p_c$ is on the $(-1)^{k+1-i_c}$-side of $l_{\mu \cup \{ a\}}$ and the $(-1)^{k+2-i_c}$-side of $l_{\mu \cup \{ b\}}$, because
$\chi_{PP} (\nu_2) = -1$ and $\chi_{PP} (\nu_3) = +1$.
Therefore, the curve $l_{\mu \cup \{ c\}}$ must intersect with either of the curves $l_{\mu \cup \{ a\}}$ and $l_{\mu \cup \{ b\}}$ in $H_{(x(p_{\mu_{i_c}}), x(p_c))}$.
This means that $l_{\mu \cup \{ c\}}$ must intersect at least $k+1$ times with either of the curves $l_{\mu \cup \{ a\}}$ and $l_{\mu \cup \{ b\}}$, which is a contradiction. 
\begin{figure}[h]
\begin{center}
\includegraphics[scale=0.4, bb =0 0 517 227]{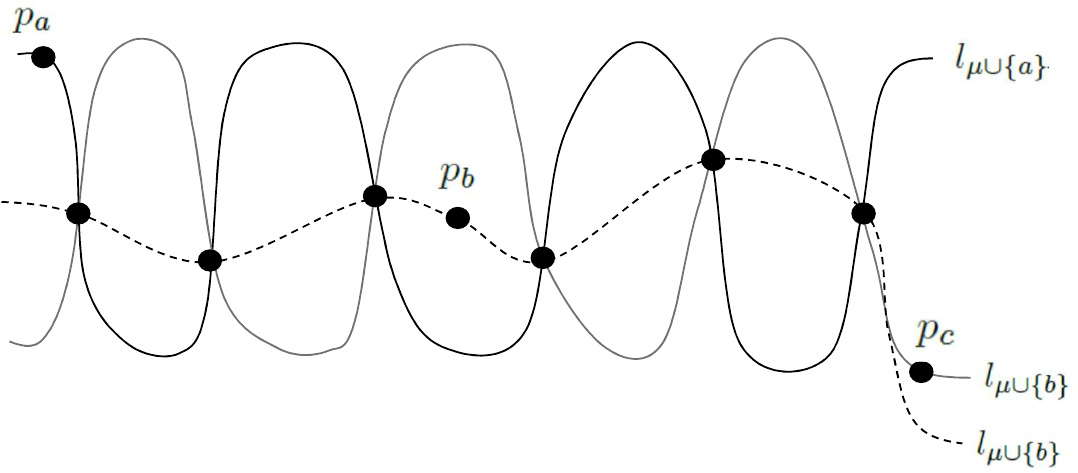}
\end{center}
\caption{$l_{\mu \cup \{ a\}}$, $l_{\mu \cup \{ b\}}$, and $l_{\mu \cup \{ c\}}$}
\end{figure}
\end{proof}

\subsection*{Step 2: Degree-$k$ oriented matroids can be represented as $k$-intersecting pseudoconfigurations of points}
Here, we prove that every degree-$k$ oriented matroid admits a topological representation as a $k$-intersecting pseudoconfiguration of points.
To this end, we introduce two operations.
\begin{defn}(Empty lens elimination I)\\
Let $R$ be an empty lens represented as $R= \overline{l_1^+} \cap \overline{l_2^-} \cap H_{[x_1,x_2]}$, 
where $l_1,l_2 \in L$, and $x_1$ (resp. $x_2$) can be $-\infty$ (resp. $\infty$).
Transform $l_1$ and $l_2$ by connecting $l_1 \cap H_{(-\infty, x_1 -\epsilon]}$, $l_2 \cap H_{[x_1 + \epsilon, x_2 -\epsilon]}$, and $l_1 \cap H_{[x_2 +\epsilon, \infty )}$ (when $x_1 \neq -\infty$),
and by connecting $l_2 \cap H_{(-\infty, x_1 -\epsilon]}$, $l_1 \cap H_{[x_1 + \epsilon, x_2 -\epsilon]}$, and $l_2 \cap H_{[x_2 +\epsilon, \infty)}$  (when $x_2 \neq \infty$),
 for sufficiently small $\epsilon > 0$, so that the new curves do not have intersections around the vertical lines $x = x_1$ and $x= x_2$ (see Figure \ref{fig:transform1}). 
\end{defn}

\begin{figure}[h]
\begin{center}
\includegraphics[scale=0.25, bb = 40 380 528 772]{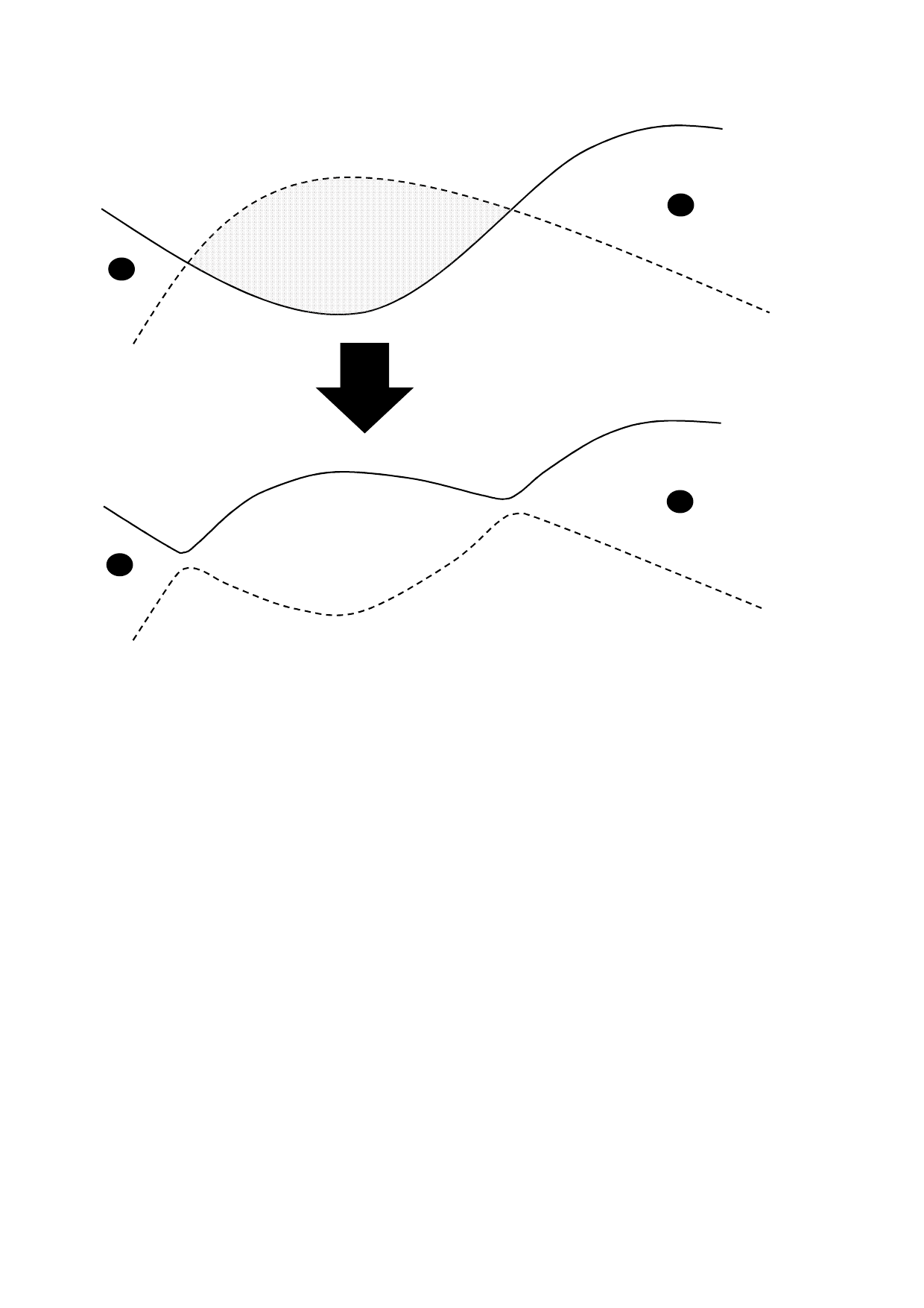}
\includegraphics[scale=0.25, bb = 40 380 528 772]{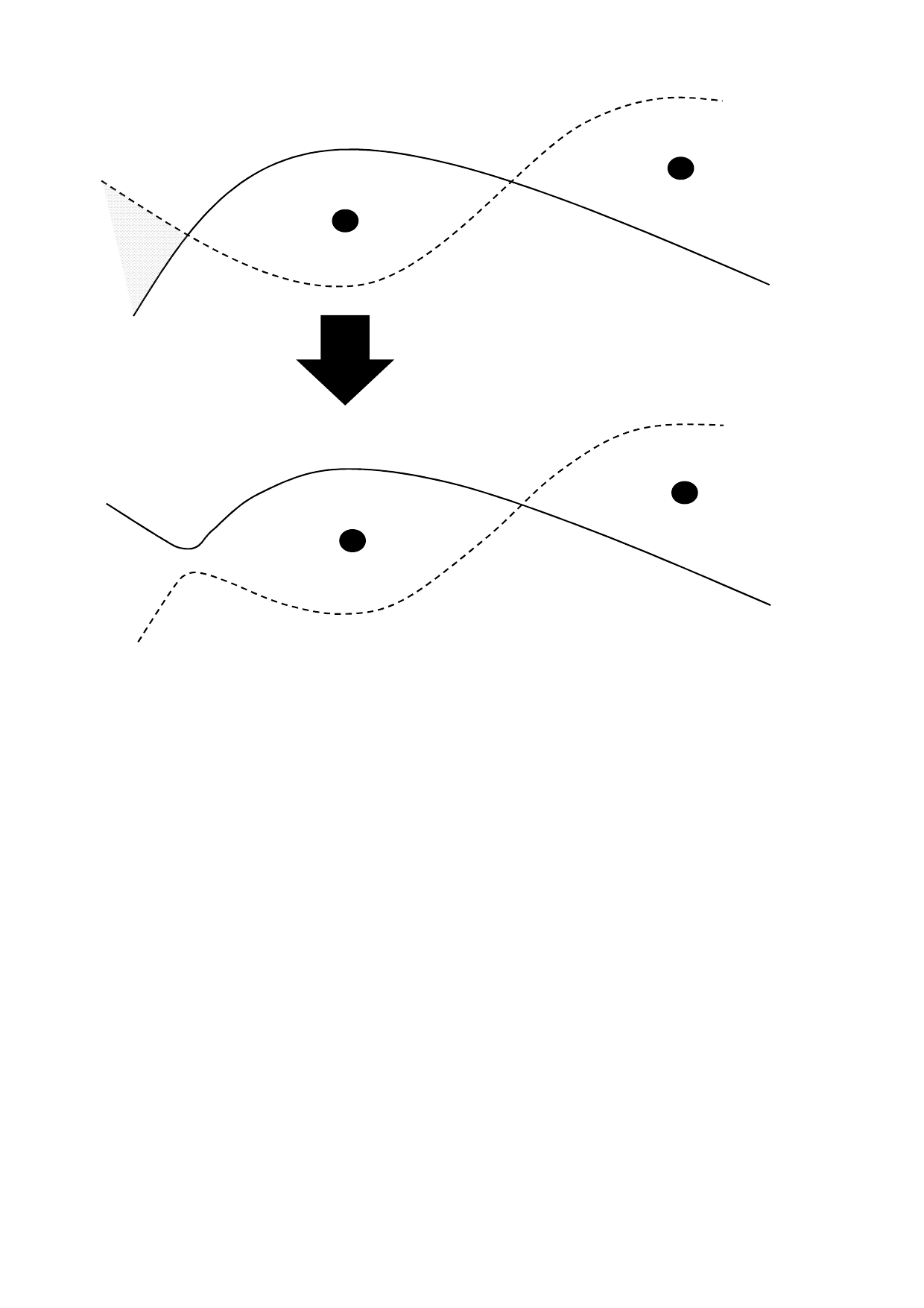}
\end{center}
\caption{Empty lens elimination I (empty lenses are shaded)}
\label{fig:transform1}
\end{figure}

\begin{defn}(Empty lens elimination II)\\
Let $R_1,\dots,R_n$ be lenses represented as $R_1= \overline{l_1^+} \cap \overline{l_2^-} \cap H_{[x_1,x_2]}$ and 
$R_2= \overline{l_1^-} \cap \overline{l_2^+} \cap H_{[x_2,x_3]}$, \dots,
$R_n= \overline{l_{n \bmod 2}^-} \cap \overline{l_{n \bmod 2 + 1}^+} \cap H_{[x_n,x_{n+1}]}$
where $l_1,l_2 \in L$, and $x_1=  -\infty$, $x_{n+1} = \infty$.
Suppose that $R_i \setminus \{ e_r(R_i)\}$, $R_{i+1} \setminus \{ e_l(R_{i+1}),e_r(R_{i+1})\}$, \dots,  $R_{j-1} \setminus \{ e_l(R_{j-1}),e_r(R_{j-1})\}$
and $R_j \setminus  \{ e_l(R_j)\}$ are empty for some $1 \leq i < j \leq n$.
Transform $l_1$ and $l_2$ by connecting $l_1 \cap H_{(-\infty, x_{i+1} -\epsilon]}$, $l_2 \cap H_{[x_{i+1} + \epsilon, x_j -\epsilon]}$, and $l_1 \cap H_{[x_j +\epsilon, \infty)}$,
and by connecting $l_2 \cap H_{(-\infty, x_{i+1} -\epsilon]}$, $l_1 \cap H_{[x_{i+1} + \epsilon, x_j -\epsilon]}$, and $l_2 \cap H_{[x_j +\epsilon, \infty)}$, for sufficiently small $\epsilon > 0$,
so that the new curves do not have intersections around the vertical lines $x = x_{i+1}$ and $x= x_j$
(see Figure \ref{fig:transform2}). 
\end{defn}

\begin{figure}[h]
\begin{center}
\includegraphics[scale=0.25, bb = 23 310 568 837]{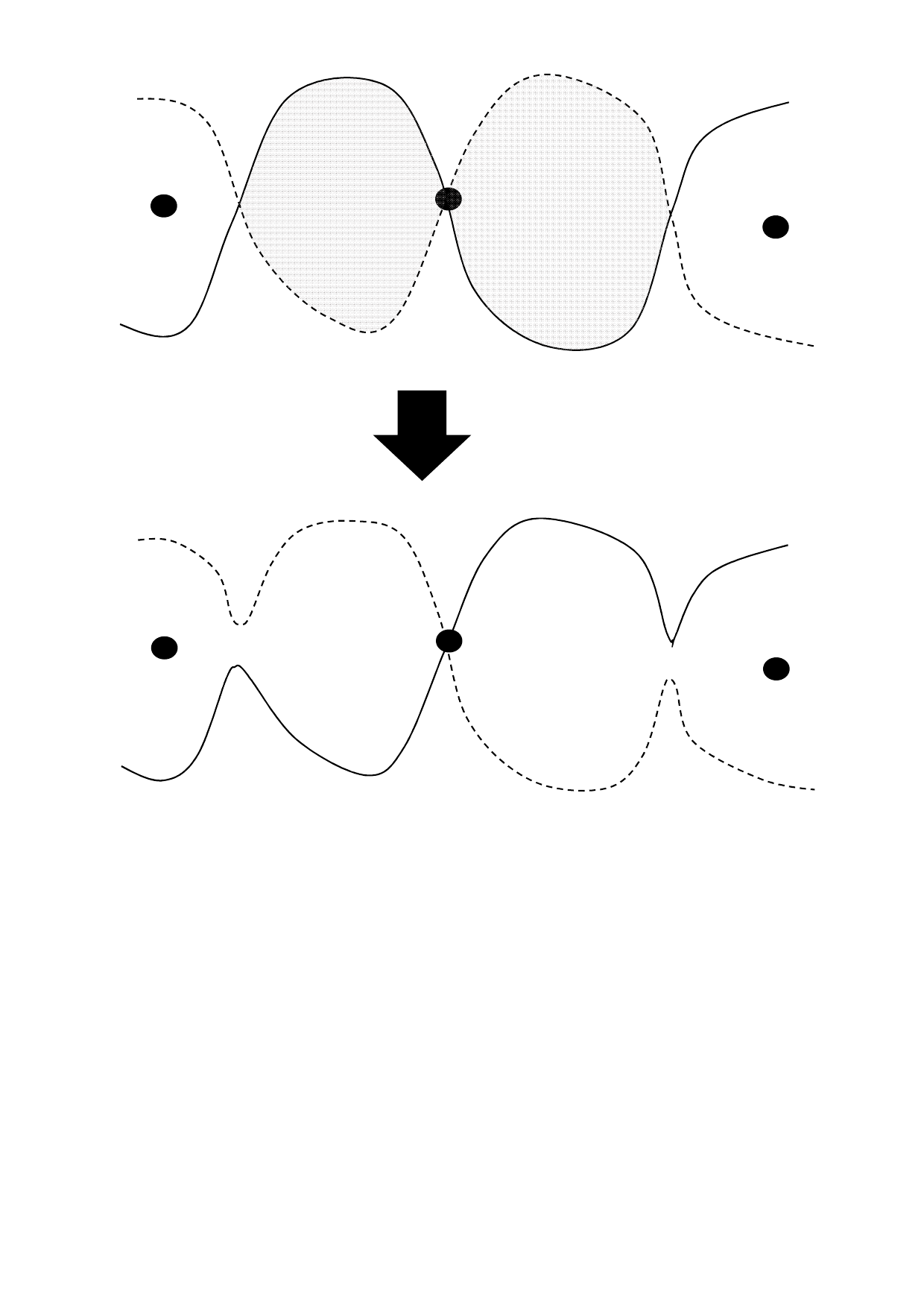} 
\includegraphics[scale=0.25, bb = 23 310 568 837]{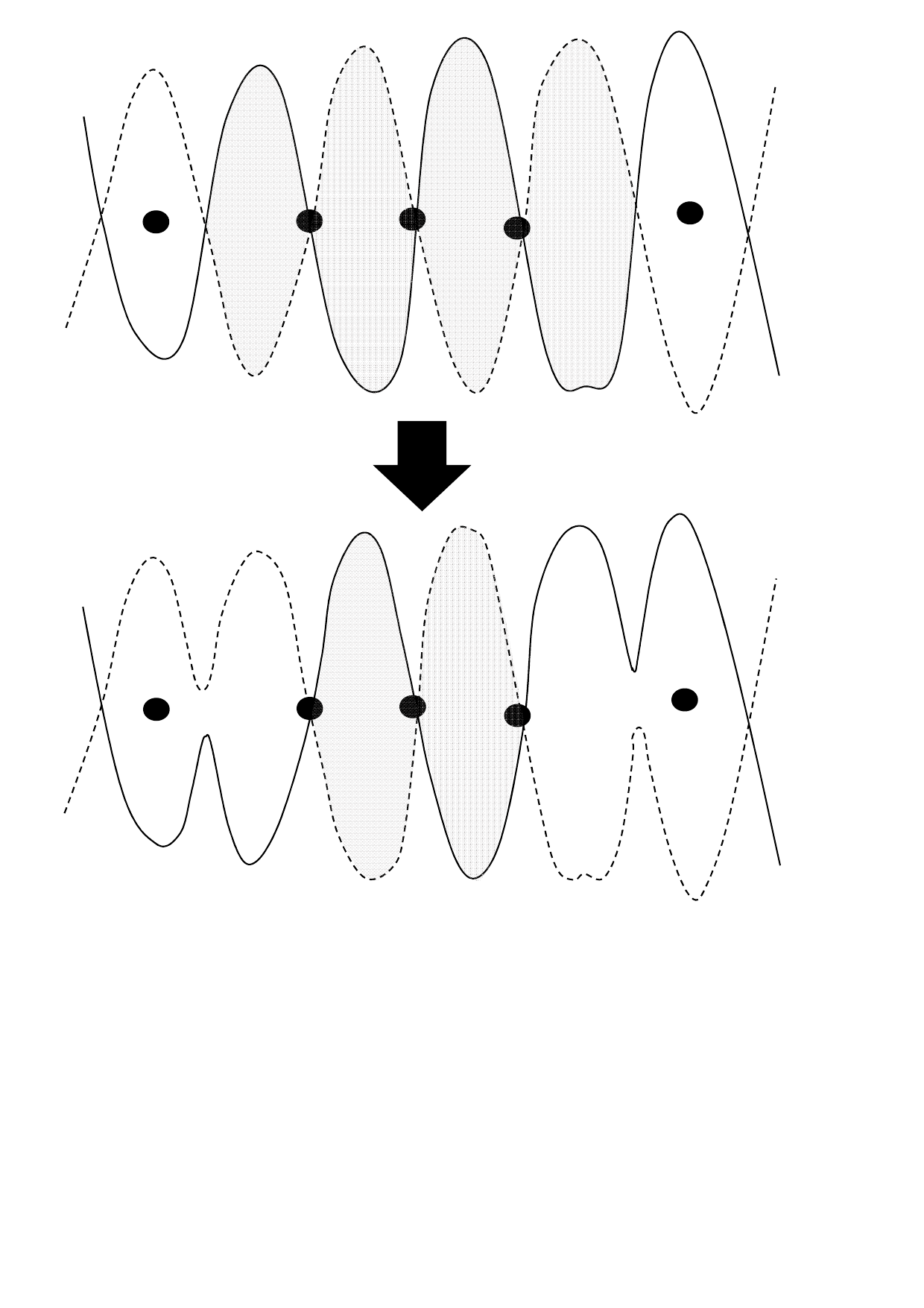} 
\end{center}
\caption{Empty lens elimination II (empty lenses are shaded)}
\label{fig:transform2}
\end{figure}

The two operations described above decrease the total number of intersection points of the curves in $L$, without altering the above-below relationships between the points in $P$ and the curves in $L$.
However, they may invalidate the condition that each pair of curves in $L$ intersect at most $k$ times.
However, we prove that a $k$-intersecting pseudoconfiguration of points is always obtained after the operations are applied as far as possible.

\begin{lem}
Let $PP=(P,L) $ be a configuration  in general position with a finite point set $P$ in $\mathbb{R}^2$ and a collection $L$ of $x$-monotone Jordan curves
satisfying (PP1) and (PP2) in Definition \ref{def:pp_configuration} (with some pair of curves possibly intersecting more than $k$ times).
We assume that the map $\chi_{PP}$ fulfills the axioms of degree-$k$ oriented matroids.
If it is impossible to apply the empty lens eliminations to $PP$, then $PP$  is a $k$-intersecting pseudoconfiguration of points.
\end{lem}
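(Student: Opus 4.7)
I would argue by contradiction. Suppose some pair $l_1, l_2 \in L$ meets at $m \geq k+1$ points $q_1 < \cdots < q_m$ while no empty lens elimination is applicable to $PP$. Setting $A := P(l_1)$ and $B := P(l_2)$ (both of size exactly $k+1$ by the general position assumption), condition (PP2) forces $s := |A \cap B| \leq k$, since otherwise the $k+1$ common points would determine a unique curve and give $l_1 = l_2$. In particular, at least one intersection $q_i$ is ``accidental'' in the sense that $q_i \notin P$. Marking each anchor in the sequence $-\infty, q_1, \ldots, q_m, +\infty$ as a $P$-anchor (a shared point in $A \cap B$) or a non-$P$-anchor (accidental intersection or $\pm\infty$), the $P$-anchors split the full sequence of $m+1$ lens regions into exactly $m-s+1$ maximal chains whose outer anchors are both non-$P$.

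The key observation is this: for elimination~II to fail on every such maximal chain, each chain must contain a lens harbouring a point of $P$ lying in its interior, i.e.\ away from the intersections of $l_1$ and $l_2$. Any such interior witness must be either (i) a point of $A \setminus B$ on the $l_1$-arc of the lens, (ii) a point of $B \setminus A$ on the $l_2$-arc, or (iii) a point of $P \setminus (A \cup B)$ lying strictly between $l_1$ and $l_2$ in the lens. Witnesses of type (i) and (ii) contribute at most $2(k+1-s)$ in total, so a first counting estimate gives
\begin{equation*}
m - s + 1 \;\leq\; 2(k+1-s) + c,
\end{equation*}
where $c$ is the number of strictly-interior witnesses. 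This inequality is not yet strong enough on its own, so $c$ must be controlled separately.

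To control $c$, I would invoke the $(k+3)$-locally unimodal property of $\chi_{PP}$. For each strictly-interior witness $p$, I would assemble $(k+3)$-tuples $\lambda \subset A \cup B \cup \{p\}$ and read off the lex-ordered size-$(k+2)$ subsets: unimodality of the corresponding chirotope values translates, via the geometric interpretation of $\chi_{PP}$ as ``above/below'' with respect to curves through $(k+1)$-subsets of $A \cup B$, into strong positional restrictions on $p$. These restrictions should show that, once $m \geq k+1$, the distribution of witnesses cannot cover all $m-s+1$ maximal chains simultaneously; some chain is then left with empty interior, and elimination~II (or elimination~I, if the chain has length one and an entirely empty lens) may be applied to $l_1, l_2$, contradicting the hypothesis.

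The main obstacle I expect is precisely this last translation step: converting the combinatorial unimodality axiom into a geometric bound tight enough to force an uncovered chain. The bookkeeping is delicate, and I anticipate needing a case split according to the relative $x$-positions of $A$, $B$, $p$, and the accidental intersections, together with separate treatment of the terminal half-lens chains and the interior full-lens chains; the alternating above/below orientation of consecutive lenses should play a decisive role in closing the argument.
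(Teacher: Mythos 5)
There is a genuine gap here: the step you yourself flag as ``the main obstacle'' --- converting the $(k+3)$-locally unimodal axiom into positional restrictions strong enough to leave some chain of lenses uncovered --- is exactly where all the work lies, and your plan gives no mechanism for carrying it out. The difficulty is structural: the unimodal property and the chirotope axioms speak about the curves through $(k+1)$-subsets of a $(k+3)$-tuple, so they cannot directly constrain where points sit relative to the fixed pair $l_1,l_2$; you must introduce auxiliary curves. This is precisely what the paper's proof does. It first sets up a minimal counterexample (a minimal point subset $\widetilde{P}$ supporting a bad pair, and among bad pairs the one whose $(k+1)$st intersection has smallest $x$-coordinate), then observes that failure of the eliminations forces one witness point per lens, takes the curve $l_{I_{k+1}}$ through the first $k+1$ witnesses, and shows it must thread the lenses of $l_1,l_2$ in a rigid way; the contradictions then come either from $l_{I_{k+1}}$ (or a further perturbed curve $l_{I_{k+1}[i_{j^*}|j]}$) being forced to meet $l_1$, $l_2$, or each other more than $k$ times --- violating the two minimality choices --- or from a sign computation violating the chirotope axioms or unimodality. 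Your counting framework contains neither the minimality devices nor the auxiliary curves, so it has no handle with which to close the argument; the inequality $m-s+1 \leq 2(k+1-s)+c$ with $c$ uncontrolled is genuinely too weak, as you note.

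Two smaller points. First, your chain count is off: if $s$ of the $m$ intersection points of $l_1$ and $l_2$ belong to $P$, cutting the sequence of $m+1$ lens regions at those $s$ anchors produces $s+1$ maximal chains, not $m-s+1$. Second, the claim that blocking elimination~II on a chain requires a witness ``in the interior, away from the intersections'' needs care: elimination~II tolerates points of $P$ at the \emph{internal} end points of the chain but not at its outer end points, so the bookkeeping of which points count as blocking witnesses is more delicate than stated. Neither slip is fatal to the general strategy, but both would need repair even if the central step were supplied.
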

\begin{proof}
Recall the notation listed in the end of Section 1.
Assume that there is a pair of curves in $L$ that intersect more than $k$ times.
Let $\widetilde{P}$ be a minimal subset of $P$
such that there are two curves in $L|_{\widetilde{P}}$ intersecting more than $k$ times after the empty lens eliminations are applied as far as possible. 
Let $l_1$ and $l_2$ be curves in $L|_{\widetilde{P}}$ that have the smallest $x$-coordinate for the $(k+1)$st intersection point.
The curves $l_1$ and $l_2$ form $k_0 \geq k$ full lenses and two half lenses.
Let us label these $k_0+2$ full and half lenses by $R_1,\dots,R_{k_0+2}$ in increasing order with respect to the $x$-coordinates.
Because the empty lens eliminations I and II cannot be applied, there must exist $k_0+2$ distinct points $p_{i_1},\dots,p_{i_{k_0+2}}$ with
$p_{i_1} \in R_1, \dots, p_{i_{k_0+2}} \in R_{k_0+2}$. 
Note that there are no points outside of the union $R_1 \cup \cdots \cup R_{i_{k_0+2}}$ of the lenses, by the minimality of $\widetilde{P}$.
Let $I_{k+1} := (i_1,\dots,i_{k+1})$ and $I_{k+2} := (i_1,\dots,i_{k+2})$. 
We now consider several possible cases separately.
\\
\\
{\bf (Case I)} $l_{I_{k+1}} \neq l_1, l_2$.

The curve $l_{I_{k+1}}$ must intersect with $l_1$ and $l_2$ at least twice in total  on the boundary of each of the full lenses $R_2,\dots,R_{{k+1}}$ (to enter and to leave the lens, where passing through  an end point is counted as twice. If $l_{I_{k+1}}$ passes through $e_l(R_j)$ and $e_r(R_j)$ for some $j \in [k+1]$, we regard that $l_{I_{k+1}}$ intersects with $R_j$ at $e_l(R_j)$ and with $R_{j+1}$ at $e_r(R_j)$) 
and at least once on the boundary of each of the half or full lenses $R_1$ and $R_{k+2}$.
If $l_{I_{k+1}}$ intersects with  $l_1$ and $l_2$ more than twice in total on the boundary of some lens $R_a$ ($a \leq k+1$), this means that they intersect at least four times on the boundary of $R_a$, and that
either of the $(k+1)$st intersection point of $l_1$ and $l_{I_{k+1}}$ or that of $l_2$ and $l_{I_{k+1}}$ belongs to the halfspace $H_{(-\infty, x(e_r(R_{k+1})))}$. This contradicts the minimality assumption for the 
$x$-coordinate of the $(k+1)$st intersection point of $l_1$ and $l_2$.
Therefore, the curve $l_{I_{k+1}}$ actually intersects with $l_1$ and $l_2$ exactly twice in total on the boundary of each full lens and once or twice on the boundary of each half lens.
If $p_{i_1} \in R_1 \setminus \{ e_r(R_1) \}$, then
the curve $l_{I_k}$ intersects with $l_1$ and $l_2$ a total of $2k+1$ or $2k+2$ times in the halfspace $H_{(-\infty,x(e_r(R_k))]}$, and thus $k+1$ times with $l_1$ or $l_2$.
Without loss of generality, we assume that $l_{I_{k+1}}$ and $l_1$ have $k+1$ intersections in the closed halfspace $H_{(-\infty, x(e_r(R_{k+1}))]}$.
Because of the minimality assumption, the $(k+1)$st intersection point of $l_{I_{k+1}}$ and $l_1$ must coincide with $e_r(R_{k+1})$.
Let $R'_1,\dots,R'_{k+2}$ be the lenses formed by $l_1$ and $l_{I_{k+1}}$ (ordered by the $x$-coordinates).
We have $p_{i_1} \in l_{I_{k+1}} \cap R'_1$, \dots, $p_{i_{k+1}}(=e_r(R'_{k+1})) \in l_{I_{k+1}} \cap R'_{k+1}$.
On the other hand, if $p_{i_1} = e_r(R_1)$, then it must hold that $p_{i_{k+1}} \in R_{k+2} \setminus \{ e_r(R_{k+1}) \}$.
This mirrors the case with $p_{i_1} \in R_1 \setminus \{ e_r(R_1) \}$ and the remainder of the discussion proceeds in the same manner.
Therefore, we consider only the case with $p_{i_1} \in R_1 \setminus \{ e_r(R_1) \}$ in what follows.
We can classify the possible situations into the following two cases.
\\
\begin{figure}[h]
\begin{center}
\includegraphics[scale=0.4, bb = 0 0 805 131]{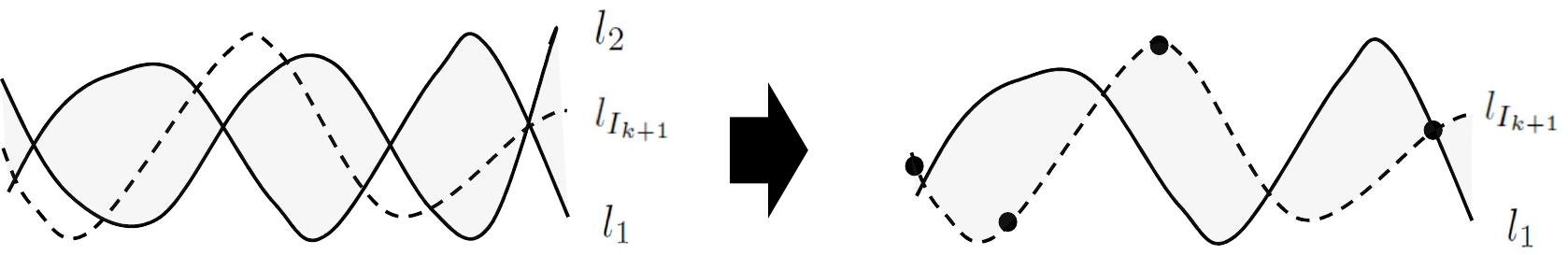} 
\end{center}
\caption{$l_1$, $l_2$, and $l_{I_{k+1}}$}
\label{fig:discussion1}
\end{figure}
\\
{\bf (Case I-i)} $(\widetilde{P} \setminus \widetilde{P}(l_{I_{k+1}})) \cap \bigcup_{i=1}^{k+1}{(R'_i \setminus \{ e_l(R'_i), e_r(R'_i) \})} \neq \emptyset$. 

Let $j := \min\{ u \in [n] \mid p_u \in (\widetilde{P} \setminus \widetilde{P}(l_{I_{k+1}})) \cap \bigcup_{i=1}^{k+1}{(R'_i \setminus \{ e_l(R'_i), e_r(R'_i) \})} \}$,
and let $j^* \in [k+1]$ be such that $p_{j} \in R'_{j^*} \setminus \{ e_l(R'_{j^*}), e_r(R'_{j^*}) \}$.
We first remark that the curves $l_{I_{k+1}[i_{j^*} | j]}$ and $l_{I_{k+1}}$ have $k+1$ intersections in the closed halfspace $H_{(-\infty,x(e_r(R'_{k+1}))]}$.
Otherwise, $l_{I_{k+1}[i_{j^*} | j]}$ and $l_1$ intersect $k+1$ times. Since $p_{i_{j^*}}$ is outside the lenses formed by $l_{I_{k+1}[i_{j^*} | j]}$ and $l_1$, this situation cannot be resolved by the empty lens eliminations in $PP|_{\widetilde{P} \setminus \{ p_{i_{j^*}}\} }$.
Since $\widetilde{P}(l_1) \cup \widetilde{P}(l_{I_{k+1}[i_{j^*} | j]}) \subset \widetilde{P} \setminus \{ p_{i_{j^*}} \}$, it contradicts the minimality assumption of $\widetilde{P}$.

Let $R^*_1,\dots,R^*_{k+2}$ be the 1st to the ($k+2$)nd lenses from the left formed by the curves $l_{I_{k+1}[i_{j^*} | j]}$ and $l_{I_{k+1}}$, which are labeled  in increasing order of the $x$-coordinates.
Then, we have 
$p_{i_l} = e_r(R^*_l)$ for all $l \in [k+1] \setminus \{ j^*\}$
and $p_j, p_{i_{j^*}} \in R^*_{j^*} \cup R^*_{j^*+1}$. 
Assume that $p_j \in R^*_{j^*}$ and  $p_{i_{j^*}} \in R^*_{j^*+1}$. Then, 
if $p_j$ is on the $\sigma$-side  ($\sigma \in \{ +1,-1\}$) of $l_{I_{k+1}}$, then $p_{i_{j^*}}$ must lie on the $\sigma$-side of $l_{I_{k+1}[i_{j^*}|j]}$ by the above-below relationship of $l_{I_{k+1}[i_{j^*} | j]}$ and $l_{I_{k+1}}$ (see Figures~{\ref{fig:discussion2} and \ref{fig:discussion2_2}}).
This implies that $\chi_{PP}(I_{k+1},j) = \chi_{PP}(I_{k+1}[i_{j^*} | j],i_{j^*})$, which contradicts Axiom (B2) of the chirotope axioms.
The case  that $p_j \in R^*_{j^*+1}$ and  $p_{i_{j^*}} \in R^*_{j^*}$ is also excluded by a similar discussion.
Therefore, we have $p_j, p_{i_{j^*}} \in R^*_{j^*}$ or $p_j, p_{i_{j^*}} \in R^*_{j^*+1}$.
Let us consider the former case assuming that $i_{j^*} < j$.
In this case, there must exist a point $p_{a} \in \widetilde{P} \cap (R^*_{a^*} \setminus \partial R^*_{a^*})$ for some $a^* \in [j^*+1, k+2]$ because otherwise empty lens elimination II can be applied
to $R^*_{j^*+1},\dots,R^*_{k+2}$.
Without loss of generality, we assume that $\chi_{PP}(I_{k+1},a) = +1$ and $\chi_{PP}(I_{k+1}[i_{j^*} | j],a) = -1$.
Then, we have
$\chi_{PP}(I_{k+1} [j,a] \setminus \{ j \}) = (-1)^{k-a^*}$ and
$\chi_{PP}(I_{k+1} [j,a] \setminus \{ i_{j^*} \}) = (-1)^{k-a^*+1}$,
where $I_{k+1}[j,a] := (i_1,\dots,i_{j^*},j,i_{j^*+1},\dots,i_{a^*-1},a,i_{a^*},\dots,i_{k+1})$.
By the $(k+3)$-locally unimodal property, we have $\chi_{PP}(I_{k+1}[j,a] \setminus \{ a \}) = (-1)^{k-a^*}$, and thus $\chi_{PP}(I_{k+1},j) = (-1)^{j^*-a^*+1}$.
This means that the point $p_j$ is on the $(-1)^{j^*-a^*+1}$-side of $l_{I_{k+1}}$, and thus that $l_{I_{k+1}[i_{j^*} | j]}$ is on the $(-1)^{j^*-a^*+1}$-side of $l_{I_{k+1}}$ at $x = x(p_j)$.
On the other hand, by the assumption $\chi_{PP}(I_{k+1},a) = +1$ and $\chi_{PP}(I_{k+1}[i_{j^*} | j],a) = -1$, the curve $l_{I_{k+1}[i_{j^*} | j]}$ must be above $l_{I_{k+1}}$ at $x = x(p_a)$.
Because the above-below relationship of $l_{I_{k+1}[i_{j^*} | j]}$ and $l_{I_{k+1}}$  is reversed at each end point of each lens,
the curve $l_{I_{k+1}[i_{j^*} | j]}$ must lie on the $(-1)^{a^*-j^*}$-side of $l_{I_{k+1}}$ at $x = x(p_j)$.
This is a contradiction.
A similar discussion also leads to a contradiction in the case that $i_{j^*} > j$.
Similarly, we obtain a contradiction in the case that $p_j, p_{i_{j^*}} \in R^*_{j^*+1}$.
\\
\begin{figure}[H]
\begin{center}
\includegraphics[scale=0.30, bb = 0 0 500 222]{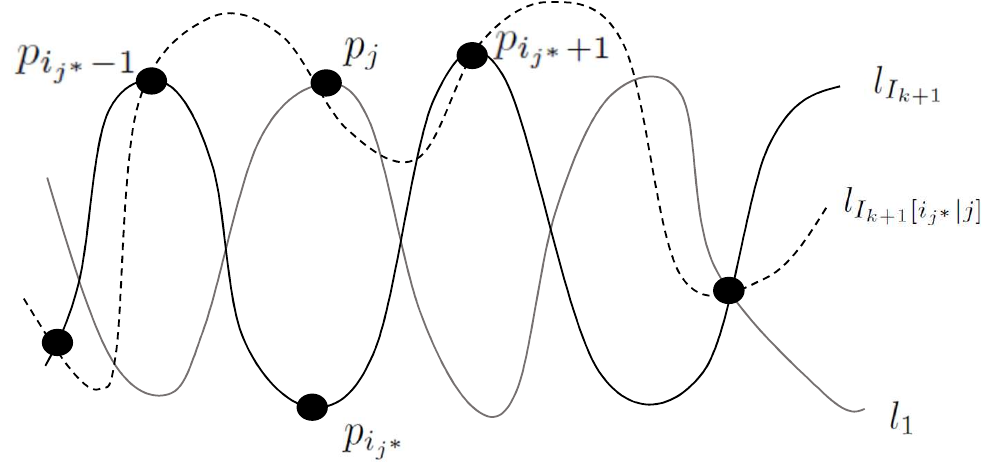} 
\includegraphics[scale=0.30, bb = 0 0 500 225]{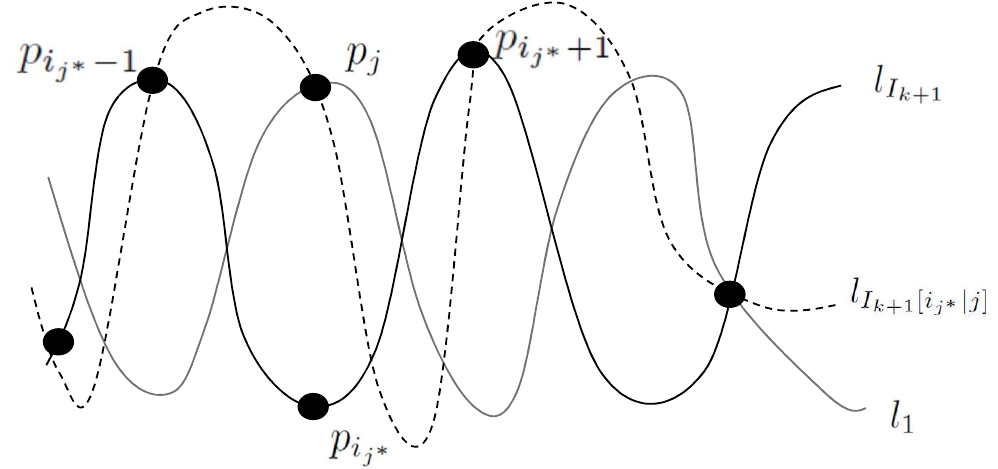}
\end{center}
\caption{$l_1$, $l_{I_{k+1}}$, and $l_{I_{k+1}[i_{j^*} | j]}$ ($p_j, p_{i_{j^*}} \in R^*_{j^*}$, $p_j \in \widetilde{P}(l_1)$)}
\label{fig:discussion2}
\end{figure}
\begin{figure}[H]
\begin{center}
\includegraphics[scale=0.30, bb = 0 0 500 237]{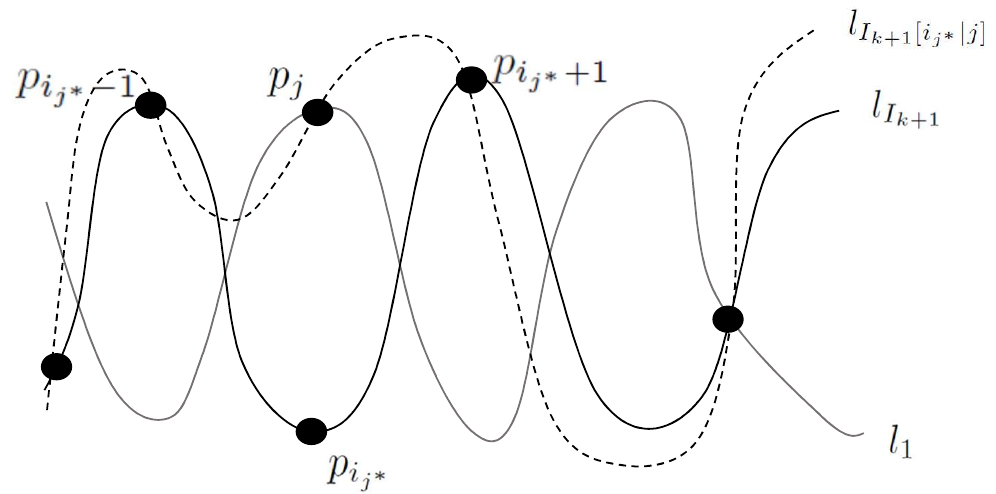} 
\includegraphics[scale=0.30, bb = 0 0 500 237]{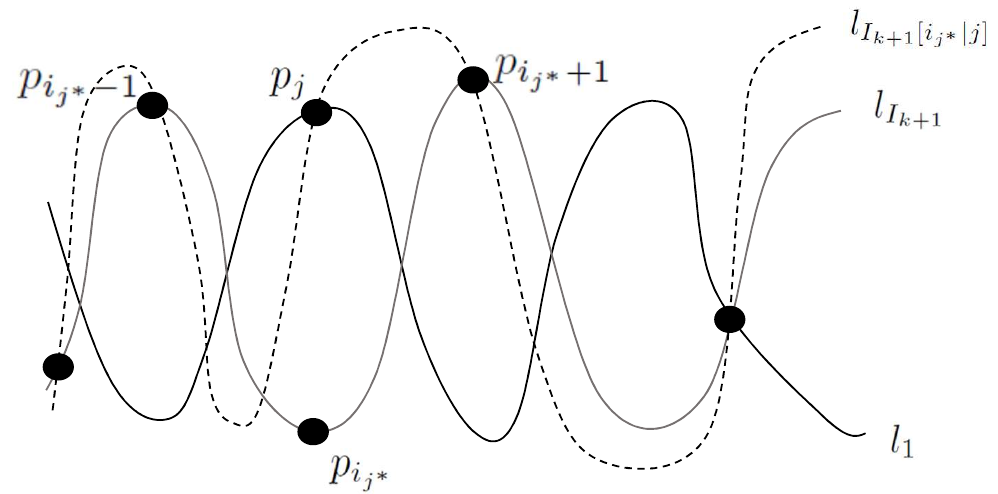}
\end{center}
\caption{$l_1$, $l_{I_{k+1}}$, and $l_{I_{k+1}[i_{j^*} | j]}$ ($p_j, p_{i_{j^*}} \in R^*_{j^*+1}$, $p_j \in \widetilde{P}(l_1)$)}
\label{fig:discussion2_2}
\end{figure}
\noindent
{\bf (Case I-ii)} $(\widetilde{P} \setminus \widetilde{P}(l_{I_{k+1}})) \cap \bigcup_{i=1}^{k+1}{(R'_i \setminus \{ e_l(R'_i), e_r(R'_i) \})} = \emptyset$. 

Let $p_{j_1},\dots,p_{j_{k+1}}$ be the points in $\widetilde{P}(l_1)$ ordered in increasing order of the $x$-coordinates.
Then,  for some $b \in [k]$ it holds that $p_{j_1},\dots,p_{j_b}(=p_{i_{k+1}}) \in \{ e_r(R'_1),\dots,e_r(R'_{k+1}) \}$ and $p_{j_{b+1}},\dots,p_{j_{k+1}} \in R'_{k+2} \setminus \{ e_l(R'_{k+2}) \}$.

First, we assume that $b \leq k-1$.
Let $c$ be the largest integer such that $p_{i_c} \in \widetilde{P}(l_{I_{k+1}}) \setminus \widetilde{P}(l_1)$. Then, we have $c \neq 1$.
The curve $l_{I_{k+1}[i_c \mid j_{b+1}]}$ intersects with $l_1$ and $l_{k+1}$ twice in total in each of the regions $R'_m \setminus \{ e_l(R'_m) \}$ for $m \in [k+1] \setminus \{ 1,c \}$
and once or twice in $R'_1$ (see Figure~\ref{fig:discussion3}).
We also remark that  $l_{I_{k+1}[i_c | j_{b+1}]}$ does not intersect twice with $l_1$ or  with $l_{I_{k+1}}$ on the boundary of  any region mentioned above because otherwise an empty lens is formed.
Therefore, $l_{I_{k+1}[i_c | j_{b+1}]}$ intersects exactly once with $l_1$ and with $l_{I_{k+1}}$ on the boundary of each of the regions $R'_m \setminus \{ e_l(R'_m) \}$ for $m \in [k+1] \setminus \{ 1,c \}$.
 If $l_{I_{k+1}[i_c | j_{b+1}]}$ intersects a total of two times with $l_1$ and $l_{I_{k+1}}$ in $R'_1 \setminus \{ e_r(R'_1)\}$, then it intersects a total of more than $2k$ times with $l_1$ and $l_{I_{k+1}}$, and thus
more than $k$ times with $l_1$ or with $l_{I_{k+1}}$.
This situation cannot be resolved by the empty lens eliminations in the subconfiguration induced by the points on the two curves intersecting more than $k$ times. 
 Since $\widetilde{P}(l_{I_{k+1}[i_c | j_{b+1}]}) \cup \widetilde{P}(l_1) \subset \widetilde{P} \setminus \{ p_{i_c} \}$ and  $\widetilde{P}(l_{I_{k+1}[i_c | j_{b+1}]}) \cup \widetilde{P}(l_{I_{k+1}}) \subset  \widetilde{P} \setminus \{ p_{j_{b+2}} \}$, 
this contradicts the minimality assumption of $\widetilde{P}$.
 If $l_{I_{k+1}[i_c | j_{b+1}]}$ intersects exactly once in total with $l_1$ and $l_{I_{k+1}}$ in $R'_1 \setminus \{ e_r(R'_1)\}$, the curve $l_{I_{k+1}[i_c | j_{b+1}]}$ leaves the lens $R'_1$ at $p_{i_1}$,
and $l_{I_{k+1}}$ lies between $l_1$ and $l_{I_{k+1}[i_c | j_{b+1}]}$ at $x= x(p_{i_1}) + \epsilon$ for sufficiently small $\epsilon > 0$.
Since each two curves of $l_1,l_{I_{k+1}}, l_{I_{k+1}[i_c | j_{b+1}]}$ intersect exactly $k$ times in $H_{(x(e_r(R'_1)),x(e_r(R'_{k+1}))+\epsilon)}$, the situation is the same at $x= x(e_r(R'_{k+1}))+\epsilon$.
Therefore, the curve  $l_{I_{k+1}[i_c | j_{b+1}]}$ must intersect with both $l_1$ and $l_{I_{k+1}}$ in $H_{(x(e_r(R'_{k+1})), x(p_{j_{b+1}}))}$ and thus it intersects more than $k$ times
with $l_1$ or with $l_{I_{k+1}}$ in $\mathbb{R}^2$.  Since this situation cannot be resolved by the empty lens eliminations, it contradicts the minimality assumption of $\widetilde{P}$.

Finally, we consider the case that $b=k$.
Let $p_{i_c} \in \widetilde{P}(l_{I_{k+1}}) \setminus \widetilde{P}(l_1)$. 
Assume that $l_1$ and $l_{I_{k+1}}$ intersect in $H_{(x(e_r(R'_{k+1})), x(p_{j_{k+1}}))}$.
Then, they form a full lens $R'_{k+2}$ and we can apply the empty lens elimination II to $R'_{k+1}$ and $R'_{k+2}$, which is a contradiction.
Therefore, the curves $l_1$ and $l_{I_{k+1}}$ do not intersect in $H_{(x(e_r(R'_{k+1})), x(p_{j_{k+1}}))}$.
Without loss of generality, we assume that $l_1$ is above $l_{I_{k+1}}$ at $x = x(p_{j_{k+1}})$, i.e., $\chi (I_{k+1},j_{k+1}) = +1$.
Then, by considering the above-below relationship of $l_1$ and $l_{I_{k+1}}$ in each lens, the point $p_{i_c}$ is on the $(-1)^{k+2-c}$-side of $l_1$, which implies
$\chi (I_{k+1} \setminus \{ i_c \}, j_{k+1}, i_c) = (-1)^{k+2-c}$.
This contradicts Axiom (B2) of the chirotope axioms.
\\
\begin{figure}[h]
\begin{center}
\includegraphics[scale=0.30, bb = 0 0 500 250]{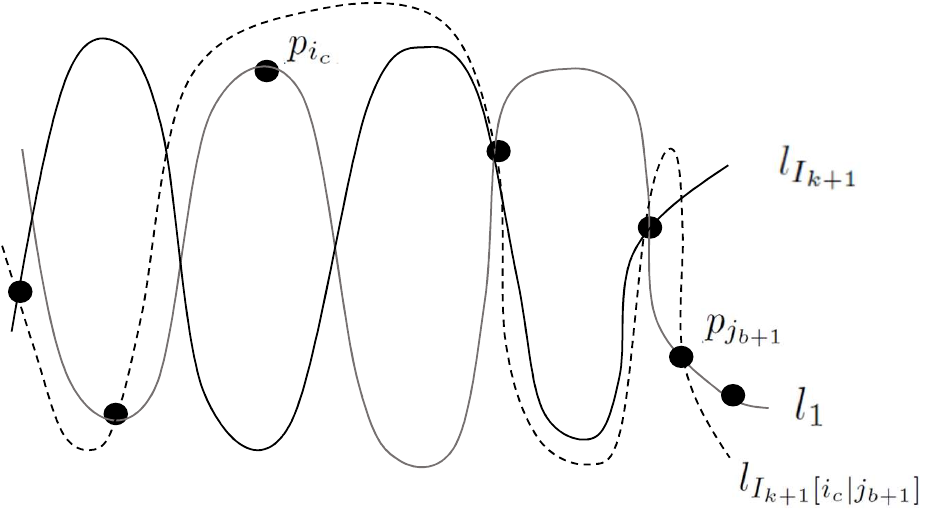} 
\includegraphics[scale=0.30, bb = 0 0 500 250]{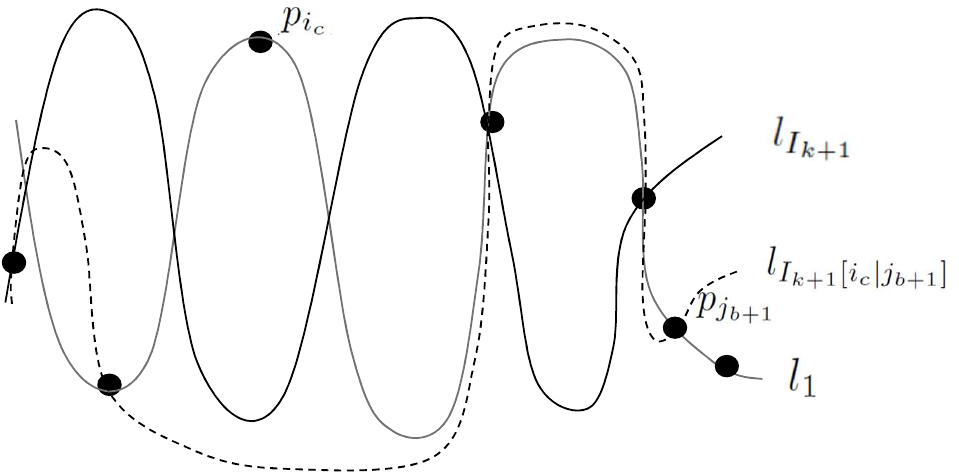} 
\end{center}
\caption{$l_1$, $l_{I_{k+1}}$, and $l_{I_{k+1}[i_c | j_{b+1}]}$}
\label{fig:discussion3}
\end{figure}
\\
 {\bf (Case II)} $l_{I_{k+1}} = l_1$ or $l_{I_{k+1}} = l_2$.
 
We can apply the same discussion as above, by considering $l_1$ and $l_2$ instead of $l_1$ and $l_{I_{k+1}}$.
\end{proof}

\begin{thm}
Let ${\cal M}$ be a degree-$k$  uniform oriented matroid on the ground set $[n]$. Then, there exists a $k$-intersecting pseudoconfiguration of points $PP = ((p_e)_{e \in [n]},L)$
such that ${\cal M} = {\cal M}_{PP}$.
\label{main_thm}
\end{thm}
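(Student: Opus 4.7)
The plan is to reduce the theorem to the preceding lemma by first producing an initial pair $(P,L)$ that satisfies (PP1), (PP2), and general position and whose chirotope $\chi_{PP}$ coincides with $\chi$, then applying empty lens eliminations until (PP3) also holds.

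For the initial realization, I place the points $p_1,\dots,p_n$ at generic positions in $\mathbb{R}^2$ with $x(p_1)<\cdots<x(p_n)$, and for each $\lambda\in\Lambda([n],k+1)$ I define $l_\lambda$ as the graph of a continuous function $f_\lambda:\mathbb{R}\to\mathbb{R}$ that satisfies $f_\lambda(x(p_{\lambda_i}))=y(p_{\lambda_i})$ for $i=1,\dots,k+1$ and ${\rm sign}(f_\lambda(x(p_e))-y(p_e))=\chi(\lambda,e)$ for every $e\in[n]\setminus\bar\lambda$. Such $f_\lambda$ clearly exist (prescribe a value strictly on the required side of $y(p_e)$ at each $x(p_e)$ and interpolate continuously), and a small generic perturbation makes the curves pairwise distinct, pairwise transverse, and incident to no point of $P$ outside the $k+1$ they were built from. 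By construction $PP_0=(P,L)$ satisfies (PP1), (PP2), lies in general position, and $\chi_{PP_0}=\chi$, so the degree-$k$ oriented matroid axioms hold for $\chi_{PP_0}$.

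Next, I apply empty lens eliminations I and II to $PP_0$ as long as possible. Each application strictly reduces the total number of intersection points among the curves in $L$, so the iteration terminates in finitely many steps. The key invariants I need to verify are that each elimination preserves (PP1), (PP2), general position, and the identity $\chi_{PP}=\chi$: in elimination I the lens is strictly empty, so the rewiring does not sweep any point of $P$; in elimination II the interior intersection points of $l_1$ and $l_2$ that happen to lie in $P$ are shared by both curves (hence unchanged $y$-values there) and therefore remain on both reconfigured curves, while no other point of $P$ is moved from one side of a curve to the other. Consequently the degree-$k$ oriented matroid axioms continue to hold throughout. When no further elimination can be applied, the preceding lemma gives that the terminal configuration $PP$ is a $k$-intersecting pseudoconfiguration of points, and $\chi_{PP}=\chi$ yields ${\cal M}_{PP}={\cal M}$.

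The main obstacle I anticipate is the second step: verifying carefully that each empty lens elimination keeps the point-curve incidence pattern and signed side-relations intact, particularly in elimination II where several intersections are removed simultaneously and some may coincide with points of $P$. Once this invariant is nailed down, the strict decrease of the crossing count forces termination and the lemma closes the argument.
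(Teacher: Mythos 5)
Your proof follows essentially the same route as the paper's: construct an initial family of $x$-monotone curves realizing $\chi$ through the prescribed $(k+1)$-tuples (the paper simply places all points on the $x$-axis), then apply empty lens eliminations I and II until termination and invoke the preceding lemma. Your added verification that each elimination strictly decreases the crossing count and preserves (PP1), (PP2), and $\chi_{PP}=\chi$ is correct and merely spells out details the paper leaves implicit.
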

\begin{proof}
Let $P = ((1,0),(2,0),\dots,(n,0))$.
For each $(i_1,\dots,i_{k+1}) \in \Lambda ([n],k+1)$,
we construct $l_{i_1,\dots,i_{k+1}}$ so that $p_j$ lies above (resp. below) it if  $\chi (i_1,\dots,i_{k+1},j) = +1$ (resp. $-1$)
and $p_{i_1},\dots,p_{i_{k+1}}$ lie on $l_{i_1,\dots,i_{k+1}}$,
and any three curves intersect at the same point $p \notin P$.
Then, apply the empty lens eliminations I and II as far as possible.
\end{proof}

\section{Concluding remarks}
\label{sec:conc}
In this paper, we have introduced a new class of oriented matroids, called degree-$k$ oriented matroids, which abstracts combinatorial behaviors of partitions of  point sets in the plane
by the graphs of polynomial functions of degree $k$.
We have proved that there exists a natural class of geometric objects, called $k$-intersecting pseudoconfigurations of points, that corresponds to degree-$k$ oriented matroids.
This implies that degree-$k$ oriented matroids are a precise combinatorial model for the possible partitions of  point sets in the plane by graphs of polynomial functions of degree $k$.
We have only considered uniform degree-$k$ oriented matroids, but generalization of Proposition~\ref{prop:pp_def_om}  and Theorem~\ref{main_thm} to the non-uniform case is straightforward.

After writing the first version of the paper, the author realized that the axiom of degree-$k$ oriented matroids (in the uniform case) coincides with the axiom of $(k+2)$-signotopes, introduced by Felsner and Weil~\cite{FW01}.
In the context of higher Bruhat orders~\cite{MS89}, Ziegler~\cite{Z93} studied $k$-signotopes under the name of ``consistent subsets'' and gave a geometric interpretation for $k$-signotopes as single element extensions of a cyclic hyperplane arrangement in $\mathbb{R}^{n-k-1}$ (equivalently, as single element liftings of cyclic hyperplane arrangements in $\mathbb{R}^{k}$).
Considering Ziegler's result, our result gives a one-to-one correspondence between (the oriented matroids of) single element liftings of a cyclic arrangement in $\mathbb{R}^k$ and (the equivalence classes of) $k$-intersecting pseudoconfigurations of points.
A special case of this correspondence can be seen in Proposition~\ref{prop:lifting}.

\section*{Acknowledgement}
The author is  grateful to Yuya Hodokuma for the opportunity to read the paper \cite{EM13} together, which motivated the author
to introduce degree-$k$ oriented matroids.
This research was partially supported by JSPS Grant-in-Aid for Young Scientists (B) 26730002.

\end{document}